\documentclass{amsart}[12pt]
\copyrightinfo{2006}{American Mathematical Society}
\usepackage{mathrsfs,amscd}

\newtheorem{theorem}{Theorem}[section]
\newtheorem{lemma}[theorem]{Lemma}

\theoremstyle{definition}
\newtheorem{definition}[theorem]{Definition}
\newtheorem{example}[theorem]{Example}

\theoremstyle{remark}
\newtheorem{remark}[theorem]{Remark}

\numberwithin{equation}{section}

\begin{document}

\title[]{The rate of convergence of new Lax-Oleinik type operators for time-periodic positive definite Lagrangian systems}

\author[K. Wang and J. Yan]{Kaizhi Wang$^{\dagger,\,\ddagger}$ and Jun Yan$^{\ddagger}$}

\address{$^{\dagger}$ School of Mathematics,
Jilin University,  Changchun 130012, China}

\address{$^{\ddagger}$ School of Mathematical Sciences, Fudan
University, Shanghai 200433, China}

\email{yanjun@fudan.edu.cn}

\subjclass[2000]{37J50}
\date{September 2011}
\keywords{weak KAM theory; new Lax-Oleinik type operators;
time-periodic Lagrangians; Hamilton-Jacobi equations; rate of
convergence.}

\begin{abstract}
Assume that the Aubry set of the time-periodic positive definite
Lagrangian $L$ consists of one hyperbolic 1-periodic orbit.
We provide an upper bound estimate of the rate of convergence of
the family of new Lax-Oleinik type operators associated with
$L$ introduced by the authors in \cite{W-Y}. In addition, we construct
an example where the Aubry set of a time-independent positive definite
Lagrangian system consists of one hyperbolic periodic orbit and
the rate of convergence of the Lax-Oleinik semigroup cannot be better
than $O(\frac{1}{t})$.
\end{abstract}

\maketitle

\section{Introduction}
In an earlier paper \cite{W-Y} the present authors introduced a
new kind of Lax-Oleinik type operator with parameters (hereinafter
referred to as new L-O operator) associated with time-periodic
positive definite Lagrangian systems in the context of the weak KAM theory,
and proved that the family of new L-O operators with an arbitrary continuous
function as initial condition converges to a backward weak KAM solution of the
corresponding Hamilton-Jacobi equation. In this paper we study the
rate of convergence of the family of new L-O operators under
the assumption that the Aubry set of the time-periodic positive
definite Lagrangian system consists of one hyperbolic
1-periodic orbit.

Let $M$ be a closed and connected smooth manifold of dimension
$m$. Denote by $TM$ its tangent bundle and $T^*M$ the cotangent
one. We choose, once and for all, a $C^\infty$ Riemannian metric
on $M$. It is classical that there is a canonical way to associate
to it a Riemannian metric on $TM$.  Consider a $C^\infty$
Lagrangian $L: TM\times\mathbb{R}^1\to\mathbb{R}^1$,
$(x,v,t)\mapsto L(x,v,t)$. We suppose that $L$ satisfies the
following conditions introduced by Mather \cite{Mat91}:

\vskip0.2cm
\begin{itemize}
    \item [(H1)] \textbf{Periodicity}. $L$ is 1-periodic in the
                 $\mathbb{R}^1$ factor, i.e.,
                 $L(x,v,t)=L(x,v,t+1)$ for all $(x,v,t)\in TM\times\mathbb{R}^1$.
    \item [(H2)] \textbf{Positive Definiteness}. For each $x\in M$ and each
                 $t\in\mathbb{R}^1$, the restriction of $L$ to $T_xM\times
                 t$ is strictly convex in the sense that its
                 Hessian second derivative is everywhere positive
                 definite.
    \item [(H3)] \textbf{Superlinear Growth}.
                 $\lim_{\|v\|_x\to+\infty}\frac{L(x,v,t)}{\|v\|_x}=+\infty$
                 uniformly on $x\in M$, $t\in\mathbb{R}^1$,
                 where $\|\cdot\|_x$ denotes the norm on $T_xM$ induced by
                 the Riemannian metric on $M$.
    \item [(H4)] \textbf{Completeness of the Euler-Lagrange Flow}.
                 The maximal solutions of the Euler-Lagrange
                 equation, which in local coordinates is:
                 \[
                 \frac{d}{dt}\frac{\partial L}{\partial
                 v}(x,\dot{x},t)=\frac{\partial L}{\partial
                 x}(x,\dot{x},t),
                 \]
                 are defined on all of $\mathbb{R}^1$.
\end{itemize}
\vskip0.2cm

The Euler-Lagrange equation is a second order periodic
differential equation on $M$ and generates a flow of
diffeomorphisms $\phi^L_t:TM\times\mathbb{S}^1\to
TM\times\mathbb{S}^1$, $t\in\mathbb{R}^1$, where $\mathbb{S}^1$
denotes the circle $\mathbb{R}^1/\mathbb{Z}$, defined by

\[
\phi^L_t(x_0,v_0,t_0)=(x(t+t_0),\dot{x}(t+t_0),(t+t_0)\
\mathrm{mod}\ 1),
\]
where $x:\mathbb{R}^1\to M$ is the maximal solution of the
Euler-Lagrange equation with initial conditions $x(t_0)=x_0$,
$\dot{x}(t_0)=v_0$. The completeness and periodicity conditions
grant that this correctly defines a flow on
$TM\times\mathbb{S}^1$. We can associate with $L$ a Hamiltonian,
as a function on $T^*M\times\mathbb{R}^1$: $H(x,p,t)=\sup_{v\in
T_xM}\{\langle p,v\rangle_x-L(x,v,t)\}$, where $\langle
\cdot,\cdot\rangle_x$ represents the canonical pairing between the
tangent and cotangent space. The corresponding Hamilton-Jacobi
equation is

\begin{align*}
w_t+H(x,w_x,t)=c(L),
\end{align*}
where $c(L)$ is the Ma$\mathrm{\tilde{n}}\mathrm{\acute{e}}$
critical value \cite{Man97} of the Lagrangian $L$. In terms of
Mather's $\alpha$ function $c(L)=\alpha(0)$. Without loss of
generality, we will from now on always assume $c(L)=0$.

Let us first recall the definition of the Lax-Oleinik semigroup
(hereinafter referred to as L-O semigroup) associated with $L$.
The L-O semigroup is well known in several domains, such as PDE,
Optimization and Control Theory, Calculus of Variations and
Dynamical Systems (especially in the Weak KAM Theory
\cite{Fat-b}). For each $t\geq 0$ and each $u\in
C(M,\mathbb{R}^1)$, let

\begin{align*}
T_tu(x)=\inf_\gamma\Big\{u(\gamma(0))+\int_0^tL(\gamma(s),\dot{\gamma}(s),s)ds\Big\}
\end{align*}
for all $x\in M$, where the infimum is taken among the continuous
and piecewise $C^1$ paths $\gamma:[0,t]\to M$ with $\gamma(t)=x$.
For each $t\geq 0$, $T_t$ is an operator from $C(M,\mathbb{R}^1)$
to itself. Since $L$ is time-periodic, then
$\{T_n\}_{n\in\mathbb{N}}$ is a one-parameter semigroup of
operators, called the L-O semigroup associated with $L$, where
$\mathbb{N}=\{0,1,2,\cdots\}$.

Fathi proved \cite{Fat4} the convergence of the full L-O semigroup
(i.e., $\{T^a_t\}_{t\geq 0}$) in the time-independent
case\footnote{The L-O semigroup associated with a time-independent
Lagrangian $L_a$ is the semigroup of operators $\{T^a_t\}_{t\geq
0}: C(M,\mathbb{R}^1)\to C(M,\mathbb{R}^1)$ defined by
\begin{align*}
T^a_tu(x)=\inf_\gamma\Big\{u(\gamma(0))+\int_0^tL_a(\gamma(s),\dot{\gamma}(s))ds\Big\},
\end{align*} where the infimum is taken among the continuous
and piecewise $C^1$ paths $\gamma:[0,t]\to M$ with
$\gamma(t)=x$.}. More precisely, he showed that for each $C^2$
superlinear and strictly convex Lagrangian $L_a:TM\to\mathbb{R}^1$
and each $u\in C(M,\mathbb{R}^1)$, the uniform limit, for
$t\to+\infty$, of $T^a_tu+c(L_a)t$ exists and the limit $\bar{u}$
is a backward weak KAM solution of the corresponding
Hamilton-Jacobi equation. In the same paper Fathi raised the
question as to whether the analogous result holds in the
time-periodic case. This would be the convergence of $T_nu+nc(L)$,
$\forall u\in C(M,\mathbb{R}^1)$, as $n\to+\infty$,
$n\in\mathbb{N}$. In view of the relation between $T_n$ and the
Peierls barrier $h$ (see \cite{Mat93} or \cite{Fat5,Ber,Con}), if
the liminf in the definition of the Peierls barrier is not a
limit, then the L-O semigroup in the time-periodic case does not
converge. Fathi and Mather \cite{Fat5} constructed examples where
the liminf in the definition of the Peierls barrier is not a
limit, thus answering the above question negatively.

As mentioned at the beginning of this paper, the authors
\cite{W-Y} introduced a new kind of operator with parameters
associated with $L$, called the new L-O operator, and proved the
convergence of the family of new L-O operators. Let us now
recall the definition of the new L-O operator and some important
results in \cite{W-Y}.

\begin{definition}[new L-O operator] For each $\tau\in[0,1]$, each $n\in\mathbb{N}$ and each
$u\in C(M,\mathbb{R}^1)$, let
\[
\tilde{T}_n^\tau u(x)= \inf_{k\in\mathbb{N} \atop n\leq k\leq
2n}\inf_{\gamma}\Big\{u(\gamma(0))+\int_{0}^{\tau+k}
L(\gamma(s),\dot{\gamma}(s),s)ds\Big\}
\]
for all $x\in M$, where the second infimum is taken among the
continuous and piecewise $C^1$ paths $\gamma:[0,\tau+k]\rightarrow
M$ with $\gamma(\tau+k)=x$.
\end{definition}

For each $\tau\in[0,1]$ and each $n\in \mathbb{N}$,
$\tilde{T}_n^\tau$ is an operator from $C(M,\mathbb{R}^1)$ to
itself. For more properties of the new L-O operator
$\tilde{T}_n^\tau$, we refer the reader to \cite{W-Y}. For each
$n\in\mathbb{N}$ and each $u\in C(M,\mathbb{R}^1)$, let
$U^u_n(x,\tau)=\tilde{T}_n^\tau u(x)$ for all $(x,\tau)\in
M\times[0,1]$. Then $U^u_n$ is a continuous function on
$M\times[0,1]$.

The main result of \cite{W-Y} is the following theorem.

\begin{theorem}\label{oth}
For each $u\in C(M,\mathbb{R}^1)$, the uniform limit
$\lim_{n\to+\infty}U^u_n$ exists and

\[
\lim_{n\to+\infty}U^u_n(x,\tau)=\inf_{y\in
M}\big(u(y)+h_{0,[\tau]}(y,x)\big)
\]
for all $(x,\tau)\in M\times[0,1]$, where $[\tau]=\tau$ mod 1, and
$h$ denotes the (extended) Peierls barrier. Furthermore, let
$\bar{u}(x,[\tau])=\inf_{y\in M}\big(u(y)+h_{0,[\tau]}(y,x)\big)$.
Then $\bar{u}:M\times\mathbb{S}^1\to\mathbb{R}^1$ is a backward
weak KAM solution of the Hamilton-Jacobi equation

\begin{align}\label{1-1}
w_s+H(x,w_x,s)=0.
\end{align}
\end{theorem}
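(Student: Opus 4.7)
The plan splits into two parts: (i) uniform convergence of $U^u_n$ on $M\times[0,1]$ to the claimed limit $\bar u$, and (ii) verification that $\bar u$ is a backward weak KAM solution of \eqref{1-1}.

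For (i), I would first exchange the two infima in the definition of $\tilde T_n^\tau u$ and write
\[
\tilde T_n^\tau u(x)=\inf_{y\in M}\bigl(u(y)+\Psi_n^\tau(y,x)\bigr),\qquad \Psi_n^\tau(y,x):=\inf_{\substack{k\in\mathbb{N}\\ n\le k\le 2n}}\inf_\gamma\int_0^{\tau+k}L(\gamma,\dot\gamma,s)\,ds,
\]
the inner infimum taken over continuous, piecewise $C^1$ curves from $y$ at time $0$ to $x$ at time $\tau+k$. Since $u$ is continuous on the compact manifold $M$, uniform convergence of $U^u_n$ on $M\times[0,1]$ is equivalent to $\Psi_n^\tau(y,x)\to h_{0,[\tau]}(y,x)$ uniformly on $M\times M\times[0,1]$. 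The direction $\liminf_n\Psi_n^\tau\ge h_{0,[\tau]}$ is essentially the definition of the Peierls barrier as a liminf over integer time-shifts. The crux is the reverse inequality: given $\varepsilon>0$, pick a point $z$ on the 1-periodic Aubry orbit. Since $c(L)=0$, the action of any integer number of loops at $z$ vanishes. Using the single-static-class structure of the Aubry set, $h_{0,[\tau]}(y,x)$ is approximated within $\varepsilon$ by a concatenated curve that runs from $y$ to $z$ in some fixed short time $s_1$, loops at $z$ a variable integer number $j$ of times, and then runs from $z$ to $x$ in some fixed short time $s_2$ with $s_1+s_2\in[\tau]+\mathbb{Z}$. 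Choosing $j$ so that the total duration lies in $[n,2n]$---possible as soon as $n\ge s_1+s_2$---exhibits some $k\in[n,2n]$ with associated action at most $h_{0,[\tau]}(y,x)+O(\varepsilon)$. Uniformity in $(y,x,\tau)$ then follows from the standard equi-Lipschitz estimates for action minimizers (consequences of (H2)-(H3)) together with a finite-net argument on the compact parameter space.

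For (ii), I would verify the two defining clauses of a backward weak KAM solution directly. Domination $\bar u(x,t)-\bar u(y,s)\le\inf_\gamma\int_s^t L(\gamma,\dot\gamma,\sigma)\,d\sigma$ for $s\le t$ follows from the elementary inequality $h_{0,t}(y',x)\le h_{0,s}(y',y)+\int_s^tL(\gamma,\dot\gamma,\sigma)\,d\sigma$, obtained by concatenating a long near-minimizer realizing $h_{0,s}(y',y)$ with the given curve and passing to the liminf; infimizing over $y'\in M$ on both sides gives the required inequality for $\bar u$. For the existence of a backward-calibrated curve through each $(x,[\tau])$, note that by continuity of $h$ and compactness of $M$ the infimum defining $\bar u(x,[\tau])$ is attained at some $y_\ast$; the liminf form of $h_{0,[\tau]}(y_\ast,x)$ supplies minimizing curves ending at $x$ with diverging time intervals and actions converging to $h_{0,[\tau]}(y_\ast,x)$. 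Superlinearity bounds the speeds a priori, and a standard diagonal extraction (as in Fathi's monograph) produces a limiting curve $\gamma:(-\infty,[\tau]]\to M$ with $\gamma([\tau])=x$ along which $\bar u$ is calibrated.

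The hardest step is the upper-bound half of (i): producing, uniformly in $(y,x,\tau)$, an integer $k\in[n,2n]$ whose associated action approximates $h_{0,[\tau]}(y,x)$. This is exactly where the doubled window $[n,2n]$, combined with the zero-cost time-padding provided by the 1-periodic Aubry orbit, does the work that pure time-shifts cannot and that the Fathi-Mather examples show to fail in general for the classical L-O semigroup.
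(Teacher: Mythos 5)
Your reduction of part (i) to the uniform convergence of $\Psi_n^\tau(y,x)=\mathcal{F}_n(0,\tau,y,x)$ to $h_{0,[\tau]}(y,x)$, the lower-bound half of that convergence, and the weak KAM verification in part (ii) (which in effect re-proves Lemma \ref{le2-2} together with the triangle inequality for $h$) are all sound. Note, however, that this paper does not prove Theorem \ref{oth} at all: it is imported from \cite{W-Y}, and its analytic core is exactly the uniform limit (\ref{2-1}) (Proposition 3.5 of \cite{W-Y}), which is where your proposal has a genuine gap.

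The gap is in the upper-bound half of (i). Theorem \ref{oth} is stated under (H1)--(H4) only; hypothesis (H5) (Aubry set equal to one hyperbolic $1$-periodic orbit) is introduced later and used only for Theorem \ref{th}. Your ``zero-cost time-padding'' argument picks a point $z$ on a $1$-periodic Aubry orbit and loops it an arbitrary integer number of times at zero cost, and it also invokes a single static class; both are (H5)-type assumptions, not consequences of (H1)--(H4). In general the Aubry set contains no periodic orbit, and for an Aubry point $q$ the identity $h_{0,0}(q,q)=0$ is only a liminf: it yields integer-time loops at $q$ of action $\le\varepsilon$ along some unbounded sequence of times, not for every integer. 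An unbounded set of good waiting times can miss infinitely many windows $[n,2n]$ (take gaps growing like powers of $4$), and concatenating many $\varepsilon$-cheap loops accumulates error, so your construction does not produce, for every large $n$, some $k\in[n,2n]$ with $F_{0,\tau+k}(y,x)\le h_{0,[\tau]}(y,x)+O(\varepsilon)$. Closing this requires an extra idea -- for instance, taking the base point generic for an ergodic minimizing measure, using Birkhoff/Poincar\'e recurrence to get return times of the time-$1$ map of positive density (a positive-density set does meet every window $[n,2n]$ eventually), and controlling the action of the Aubry arc via the static-curve identity $\int_0^m L=-\Phi_{0,0}(\gamma(m),q)$ -- which is precisely the content of (\ref{2-1}) in \cite{W-Y} and the precise point separating $\tilde T_n$ from the non-convergent $T_n$ of the Fathi--Mather examples \cite{Fat5}. (Two smaller imprecisions: approximating $h_{0,0}(y,z)$ within $\varepsilon$ generally needs long, not ``fixed short'', connecting times, and zero-cost loops follow from the static $1$-periodic structure, not merely from $c(L)=0$.) Under (H5) your padding argument is fine, but then you have proved only a special case of the stated theorem.
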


Another important result of \cite{W-Y} states as follows.

\begin{theorem}
Let $\bar{u}\in C(M\times\mathbb{S}^1,\mathbb{R}^1)$. Then the
following three statements are equivalent.
\begin{itemize}
    \item There exists $u\in C(M,\mathbb{R}^1)$ such that the uniform limit
           $\lim_{n\to+\infty}U^u_n=\bar{u}$.
    \item $\bar{u}$ is a backward weak KAM solution of
          (\ref{1-1}).
    \item $\bar{u}$ is a viscosity solution of (\ref{1-1}).
\end{itemize}
\end{theorem}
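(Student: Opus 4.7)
My plan is to establish the cycle (1)$\Rightarrow$(2), (2)$\Leftrightarrow$(3), and (2)$\Rightarrow$(1); only the last has substantive content. Indeed (1)$\Rightarrow$(2) is immediate, because if $\bar u=\lim_n U^u_n$ uniformly for some $u\in C(M,\mathbb{R}^1)$, Theorem~\ref{oth} both supplies the explicit formula for $\bar u$ and asserts that $\bar u$ is a backward weak KAM solution of~(\ref{1-1}). The equivalence (2)$\Leftrightarrow$(3) is the classical identification, due to Fathi, of backward weak KAM solutions with viscosity solutions of the Hamilton--Jacobi equation associated with a Tonelli Lagrangian; the time-periodic version proceeds verbatim by viewing the equation on $M\times\mathbb{S}^1$, and may be referenced to~\cite{Fat-b}.

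For (2)$\Rightarrow$(1), given a backward weak KAM solution $\bar u$ of~(\ref{1-1}), I would set $u(x):=\bar u(x,0)\in C(M,\mathbb{R}^1)$. By Theorem~\ref{oth} it then suffices to establish the representation
\[
\bar u(x,[\tau])=\inf_{y\in M}\bigl(\bar u(y,0)+h_{0,[\tau]}(y,x)\bigr),\qquad (x,\tau)\in M\times[0,1].
\]
The inequality ``$\leq$'' rests only on domination: for any $y\in M$, any $k\in\mathbb{N}$, and any continuous piecewise $C^1$ curve $\gamma:[0,\tau+k]\to M$ with $\gamma(0)=y$ and $\gamma(\tau+k)=x$, one has $\bar u(x,\tau+k)\leq \bar u(y,0)+\int_0^{\tau+k}L\,ds$, and by $1$-periodicity of $\bar u$ in $t$ the left-hand side equals $\bar u(x,[\tau])$. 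Rearranging, $\int_0^{\tau+k}L\,ds\geq \bar u(x,[\tau])-\bar u(y,0)$; taking the infimum over $\gamma$, then over $k\in[n,2n]$, and letting $n\to\infty$, the left-hand side converges to $h_{0,[\tau]}(y,x)$ by the defining characterization of the Peierls barrier implicit in Theorem~\ref{oth}, and infimum over $y$ gives the desired bound.

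The reverse inequality uses a backward $\bar u$-calibrated curve $\gamma:(-\infty,\tau]\to M$ with $\gamma(\tau)=x$. Writing $y_k:=\gamma(-k)$ and invoking both the calibration identity and the $1$-periodicity of $\bar u$ in $t$ yields
\[
\bar u(x,\tau)=\bar u(y_k,0)+\int_{-k}^{\tau}L(\gamma,\dot\gamma,r)\,dr,
\]
and the time-translate $s\mapsto\gamma(s-k)$, $s\in[0,\tau+k]$, is (by the $1$-periodicity of $L$) an admissible competitor in the definition of $\tilde{T}_k^\tau u(x)$ realizing this same action. Extracting a subsequence $y_{k_j}\to y^*\in M$ by compactness of $M$ and passing to the limit should give $\bar u(x,\tau)\geq \bar u(y^*,0)+h_{0,[\tau]}(y^*,x)$, whence the required ``$\geq$''. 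The main obstacle is precisely this last limit passage: because $h_{0,[\tau]}$ arises as a double infimum (over $k\in[n,2n]$ and over curves) followed by $n\to\infty$, one cannot directly substitute $y^*$ for $y_{k_j}$ in the calibration formula. The remedy is to prepend, for each large $j$, a short connecting curve from $y^*$ to $y_{k_j}$ whose action vanishes in the limit, using the Lipschitz dependence of finite-time action minima on endpoints that follows from the Tonelli conditions (H1)--(H4) and the compactness of $M$.
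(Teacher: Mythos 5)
This theorem is not proved in the present paper at all: it is quoted from \cite{W-Y}, so there is no in-paper argument to compare with line by line, and your proposal has to be judged on its own. It is essentially correct. (1)$\Rightarrow$(2) is indeed immediate from Theorem \ref{oth}, and (2)$\Leftrightarrow$(3) is the standard weak KAM/viscosity identification, though in the time-periodic setting the right reference is Contreras--Iturriaga--S\'anchez Morgado \cite{Con} rather than a literally ``verbatim'' transcription of \cite{Fat-b} (the extended Hamiltonian on $T^*(M\times\mathbb{S}^1)$ is not Tonelli in the momentum conjugate to $t$, so one argues directly in the periodic setting). For (2)$\Rightarrow$(1), the route available inside this paper is shorter than yours: formula (\ref{2-2}), quoted from \cite{Con}, gives $\bar u(x,s)=\min_{(q,0)\in\mathbb{A}_0}\bigl(\bar u(q,0)+h_{0,s}(q,x)\bigr)\ge\inf_{y\in M}\bigl(\bar u(y,0)+h_{0,s}(y,x)\bigr)$, while domination plus the liminf definition of $h$ gives the opposite inequality, so Theorem \ref{oth} applied to $u=\bar u(\cdot,0)$ finishes. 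Your direct argument via a backward calibrated curve also closes, and the obstacle you single out is resolved exactly as you suggest provided you implement it through the uniform Lipschitz continuity of $(x,y)\mapsto F_{t,t'}(x,y)$ (constant $D_{\mathrm{Lip}}$ independent of $t,t'$ with $t+1\le t'$, \cite{Ber}): from calibration, $\bar u(x,[\tau])\ge \bar u(y_{k_j},0)+F_{0,\tau+k_j}(y_{k_j},x)\ge \bar u(y_{k_j},0)+F_{0,\tau+k_j}(y^*,x)-D_{\mathrm{Lip}}\,d(y^*,y_{k_j})$, and $\liminf_j F_{0,\tau+k_j}(y^*,x)\ge h_{0,[\tau]}(y^*,x)$ because a liminf along the subfamily $t=0$, $t'=\tau+k_j$ dominates the liminf defining $h$. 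Note that literally ``prepending a short connecting curve'' is slightly off: a connector of small non-integer duration destroys the constraint $[t]=0$, $[t']=[\tau]$ in the definition of $h$, whereas the endpoint-Lipschitz estimate keeps the time interval intact. With that reading your proof is sound, and it has the mild merit of not invoking the static-class representation (\ref{2-2}).
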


The aim of the present paper is to derive the rate of convergence
of $U^u_n$ in a special case. More precisely, we will provide an
upper bound estimate of the rate of convergence of $U^u_n$
associated with a $C^\infty$ Lagrangian $L$, which satisfies
(H1)-(H4) and the following hypothesis.

\vskip0.1cm

(H5) The Aubry set consists of one hyperbolic 1-periodic
orbit.

\vskip0.1cm

Now we come to the major result of this paper.

\begin{theorem}\label{th}
If a $C^\infty$ Lagrangian $L:
TM\times\mathbb{R}^1\to\mathbb{R}^1$ satisfies the hypotheses
(H1)-(H5), then there exists $\rho>0$ such that for each $u\in
C(M,\mathbb{R}^1)$ there is $K>0$ such that
\[
\|U^u_n(x,\tau)-\bar{u}(x,[\tau])\|_\infty\leq Ke^{-\rho n}, \quad
\forall n\in\mathbb{N},
\]
where $\|\cdot\|_\infty$ denotes the supremum norm in the space
$C(M\times[0,1],\mathbb{R}^1)$.
\end{theorem}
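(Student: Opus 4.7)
The plan is to reduce Theorem~\ref{th} to a uniform exponential estimate on the convergence of finite-time minimum actions to the Peierls barrier, and then to derive that estimate via a loop-insertion/deletion argument exploiting the $1$-periodicity of $L$ and the hyperbolicity of the Aubry orbit. Writing $T_{s}u(x)=\inf_{y\in M}\bigl(u(y)+h^{(0,s)}(y,x)\bigr)$, where $h^{(0,s)}(y,x)$ denotes the minimum Lagrangian action from $(y,0)$ to $(x,s)$, one has
\[
U^{u}_{n}(x,\tau)=\inf_{y\in M}\Bigl(u(y)+\inf_{n\le k\le 2n}h^{(0,k+\tau)}(y,x)\Bigr),
\]
while Theorem~\ref{oth} yields $\bar u(x,[\tau])=\inf_{y\in M}\bigl(u(y)+h_{0,[\tau]}(y,x)\bigr)$. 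Thus the conclusion of Theorem~\ref{th} reduces to the uniform estimate
\[
\sup_{y,x\in M,\,\tau\in[0,1]}\bigl|h^{(0,k+\tau)}(y,x)-h_{0,[\tau]}(y,x)\bigr|\le K_{0}e^{-\rho k},\qquad k\in\mathbb{N}.
\]

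Let $\Gamma\subset TM\times\mathbb{S}^{1}$ be the hyperbolic $1$-periodic orbit of (H5), parametrized by $\gamma_{0}:\mathbb{R}\to M$, and let $\mu>0$ be a lower bound on the real parts of its nontrivial Floquet exponents. Standard weak KAM arguments (Mather's graph theorem, a~priori bounds on minimizers, smoothness of the stable and unstable manifolds of $\Gamma$ in the extended phase space, and transversality of $W^{s}(\Gamma)$ and $W^{u}(\Gamma)$) yield a uniform shadowing estimate: for every minimizer $\gamma^{*}_{k}:[0,k+\tau]\to M$ of $h^{(0,k+\tau)}(y,x)$ and all $s\in[c_{0},k+\tau-c_{0}]$ with a universal $c_{0}>0$,
\[
d\bigl(\gamma^{*}_{k}(s),\gamma_{0}(s\bmod 1)\bigr)\le Ce^{-\mu\min(s,\,k+\tau-s)}.
\]
Choosing $t_{0}\in[k/3,2k/3]$ minimizing this distance gives $\epsilon:=d(\gamma^{*}_{k}(t_{0}),\Gamma)\le Ce^{-\mu k/3}$.

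For the upper half of the claimed estimate, build a competitor of duration $k+1+\tau$ by splicing: use $\gamma^{*}_{k}$ on $[0,t_{0}]$; on $[t_{0},t_{0}+1]$ use the unique Euler--Lagrange trajectory $\gamma_{q}$ from $q:=\gamma^{*}_{k}(t_{0})$ back to $q$ (existence and uniqueness near $\Gamma$ come from the implicit function theorem applied at the hyperbolic fixed point of the time-one Poincar\'e map); and on $[t_{0}+1,k+1+\tau]$ use the time-shifted tail $\gamma^{*}_{k}(\cdot-1)$, whose action equals that of $\gamma^{*}_{k}|_{[t_{0},k+\tau]}$ by the $1$-periodicity of $L$. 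Writing $S(q)$ for the action of the middle loop, the first variation formula gives
\[
\nabla S(q)=L_{v}\bigl(q,\dot\gamma_{q}(t_{0}+1),t_{0}+1\bigr)-L_{v}\bigl(q,\dot\gamma_{q}(t_{0}),t_{0}\bigr),
\]
which vanishes at $q=\gamma_{0}(t_{0}\bmod 1)$ by the $1$-periodicity of $L$ and $\gamma_{0}$; combined with $S(\gamma_{0}(t_{0}\bmod 1))=0$ this gives $S(q)=O(\epsilon^{2})=O(e^{-2\mu k/3})$, hence $h^{(0,k+1+\tau)}(y,x)\le h^{(0,k+\tau)}(y,x)+C'e^{-2\mu k/3}$. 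The reverse inequality follows from the symmetric loop-deletion splicing applied to a minimizer of $h^{(0,k+1+\tau)}$; the linear boundary terms in the Taylor expansion of the action about $\Gamma$ cancel once more, thanks to the $1$-periodicity of the momentum $L_{v}$ along $\gamma_{0}$. Telescoping with $\rho=2\mu/3$ produces the claimed exponential convergence, and substituting into the reduction above completes the proof of Theorem~\ref{th}.

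The main obstacle is the combination of the uniform shadowing estimate and the quadratic action bound on $S(q)$: both depend on a delicate tubular-neighborhood analysis of the Euler--Lagrange flow near $\Gamma$ (smoothness of $W^{s,u}(\Gamma)$, absence of conjugate points along long minimizers, and uniform coercivity of the second variation), and one must check uniformity of all constants in $(y,x,\tau)$ as well as correctly handle the non-integer fractional part $\tau$ and the moving infimum window $k\in[n,2n]$.
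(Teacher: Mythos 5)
Your reduction replaces the theorem by the per-$k$ statement $\sup_{y,x,\tau}\bigl|F_{0,k+\tau}(y,x)-h_{0,[\tau]}(y,x)\bigr|\leq K_0e^{-\rho k}$ for every integer $k$, and everything then hinges on the ``uniform shadowing estimate'' that every minimizer of $F_{0,k+\tau}(y,x)$ stays exponentially close to the hyperbolic orbit $\Gamma$ throughout its middle portion, uniformly in $(y,x,\tau,k)$. That estimate is precisely the hard core of the problem, and you assert it with an appeal to ``standard weak KAM arguments'' plus transversality of $W^{s}(\Gamma)$ and $W^{u}(\Gamma)$ --- but transversality is neither a hypothesis of the theorem nor a consequence of (H5), and it is not used in the paper. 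Worse, even the qualitative version (that the middle third of such a minimizer enters a prescribed neighborhood of $\tilde{\mathcal{A}}_0$) is not available for fixed-time minimizers: the paper's Lemma \ref{le2-3} proves it only for minimizers of the windowed quantity $\mathcal{F}_n(0,\tau,y,x)=\inf_{n\leq k\leq 2n}F_{0,k+\tau}(y,x)$, and its contradiction argument relies crucially on the uniform convergence $\mathcal{F}_n\to h$ of (\ref{2-1}), because the surgery there changes the total duration of the curve and only the windowed infimum absorbs that change. For a single fixed $k$ this is exactly the ``is the liminf a limit'' issue behind the Fathi--Mather counterexamples, so your per-$k$ claim (which would give exponential convergence of the full semigroup $T_{k+\tau}u$, strictly stronger than the theorem) needs an independent argument --- e.g.\ first proving the upper bound by splicing through the orbit, then using it to control the actions of the fixed-time minimizers --- none of which appears in the proposal. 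Note also that the paper itself, even after confining the minimizer to a tube, only obtains exponential closeness at one well-chosen midpoint time, via the H\"older Hartman--Grobman theorem and at the cost of a H\"older exponent $\alpha$ in the rate; your stronger estimate with rate exactly $\mu$ along the whole middle segment is not justified.

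There are secondary gaps as well. The ``loop-deletion'' step is not symmetric to the insertion: a minimizer of duration $k+1+\tau$ contains no exact closed loop, so the first-order cancellation you invoke for $S(q)$ does not transfer; one must instead patch the time-shifted tail to the head with a unit-time Tonelli segment and use the Lipschitz constant $D_{\mathrm{Lip}}$ of $F_{t,t'}$, which yields a bound linear (not quadratic) in the distance to $\Gamma$ --- still exponential, but the argument as written is incorrect. Likewise, the existence, uniqueness and $C^2$-dependence on $q$ of the unit-time loop $\gamma_q$ require a nondegeneracy of the time-one boundary value problem near $\Gamma$ that is not checked. By contrast, the paper never needs a per-$k$ estimate: for (I1) it combines Lemma \ref{le2-1}, Lemma \ref{le2-3} and the H\"older Grobman--Hartman linearization of the Poincar\'e map at $(p,v_p)$; for (I2) it builds a single competitor with some admissible duration $\bar k_n\in[n,2n]$ out of the calibrated curves of the backward and forward weak KAM solutions, which approach the hyperbolic orbit exponentially. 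Your route could in principle be made to work and would prove more, but as it stands its central estimate is unproved and rests on an extraneous transversality assumption.
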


We believe that
there is a deep relation between dynamical properties of the Aubry
set (Mather set) and the rates of convergence of the L-O semigroup
(time-independent case) and the family of new L-O operators.
We now would like to detail on available relative works in the
literature. All these results are for time-independent Lagrangian
systems.

\vskip0.1cm

\noindent I. {\em Results on the rate of convergence of the L-O
semigroup $\{T^a_t\}_{t\geq 0}$}:

\vskip0.1cm

In \cite{Itu}, Iturriaga and S$\mathrm{\acute{a}}$nchez-Morgado
proved that if the Aubry set consists in a finite number of
hyperbolic fixed points, the L-O semigroup converges
exponentially. At the end of this paper, we will construct an
example (Example \ref{ex}) to show that the rate of convergence of
the L-O semigroup cannot be better than $O(\frac{1}{t})$ under the
assumption that the Aubry set consists of a finite number of
hyperbolic periodic orbits.

The authors \cite{Wan} dealt with the
rate of convergence problem when the Mather set consists of
degenerate fixed points. More precisely, consider the Lagrangian
$L^0_a(x,v)=\frac{1}{2}v^2+V(x)$, $x\in \mathbb{S}^1$, $v\in
\mathbb{R}^1$, where $V$ is a real analytic function on
$\mathbb{S}^1$ and has a unique global minimum point $x_0$.
Without loss of generality, one may assume $x_0=0$, $V(0)=0$. Then
$c(L^0_a)=0$ and $\tilde{\mathcal{M}}_0=\{(0,0)\}$, where
$\tilde{\mathcal{M}}_0$ is the Mather set with cohomology class 0.
An upper bound estimate of the rate of convergence of the L-O
semigroup is provided under the assumption that $\{(0,0)\}$ is a
degenerate fixed point: for every $u\in
C(\mathbb{S}^1,\mathbb{R}^1)$, there exists a constant $K_1>0$
such that

\[
\|T^a_tu-\bar{u}\|_\infty\leq\frac{K_1}{\sqrt[k-1]{t}}, \quad
\forall t>0,
\]
where $k\in \mathbb{N}$, $k\geq 2$ depends only on the degree of
degeneracy of the minimum point of the potential function $V$.

In \cite{W-Y} the authors discussed the rate of convergence problem
when the Aubry set is a quasi-periodic invariant torus of the
Euler-Lagrange flow. Consider a class of $C^2$  superlinear and
strictly convex Lagrangians on $\mathbb{T}^n$

\begin{align}\label{1-2}
L^1_a(x,v)=\frac{1}{2}\langle
A(x)(v-\omega),(v-\omega)\rangle+f(x,v-\omega), \quad x\in
\mathbb{T}^n,\ v\in\mathbb{R}^n,
\end{align}
where $A(x)$ is an $n\times n$ matrix, $\omega\in\mathbb{S}^{n-1}$
is a given vector, and $f(x,v-\omega)=O(\|v-\omega\|^3)$ as
$v-\omega\rightarrow 0$. It is clear that $c(L^1_a)=0$ and
$\tilde{\mathcal{M}}_0=\tilde{\mathcal{A}}_0=\tilde{\mathcal{N}}_0=\cup_{x\in\mathbb{T}^n}(x,\omega)$,
which is a quasi-periodic invariant torus with frequency vector
$\omega$ of the Euler-Lagrange flow associated to $L^1_a$, where
$\tilde{\mathcal{A}}_0$ and $\tilde{\mathcal{N}}_0$ are the Aubry
set and the Ma$\mathrm{\tilde{n}}\mathrm{\acute{e}}$ set with
cohomology class 0, respectively. For (\ref{1-2}), the authors
showed that for each $u\in
C(\mathbb{T}^n,\mathbb{R}^1)$, there is a constant $K_2>0$ such
that

\begin{align}\label{1-3}
\|T^a_tu-\bar{u}\|_\infty\leq\frac{K_2}{t}, \quad \forall t>0.
\end{align}
An example was also provided in \cite{W-Y} to show that the above
result is sharp in the sense of order.

\vskip0.1cm

\noindent II. {\em Results on the rate of convergence of the new
L-O semigroup $\{\tilde{T}^a_t\}_{t\geq 0}$}\footnote{In
\cite{W-Y}, the authors also introduced a new kind of Lax-Oleinik
type operator $\tilde{T}^a_t$ associated with time-independent
Lagrangians. The new L-O semigroup associated with a
time-independent Lagrangian $L_a$ is the semigroup of operators
$\{\tilde{T}^a_t\}_{t\geq 0}: C(M,\mathbb{R}^1)\to
C(M,\mathbb{R}^1)$ defined by \[ \tilde{T}^a_tu(x)=\inf_{t\leq
\sigma\leq 2t}\inf_{\gamma}\Big\{u(\gamma(0))+\int_0^\sigma
L_a(\gamma(s),\dot{\gamma}(s))ds\Big\},
\]
where the second infimum is taken among the continuous and
piecewise $C^1$ paths $\gamma:[0,\sigma]\rightarrow M$ with
$\gamma(\sigma)=x$.}:

\vskip0.1cm

The authors showed in \cite{W-Y} that for each $C^2$ superlinear
and strictly convex Lagrangian $L_a$ with $c(L_a)=0$ and each
$u\in C(M,\mathbb{R}^1)$, the uniform limit
$\lim_{t\rightarrow+\infty}\tilde{T}^a_tu$ exists and $
\lim_{t\rightarrow+\infty}\tilde{T}^a_tu=\lim_{t\rightarrow+\infty}T^a_tu=\bar{u}.
$ Furthermore,
$\|\tilde{T}^a_tu-\bar{u}\|_\infty\leq\|T^a_tu-\bar{u}\|_\infty$,
$\forall t\geq 0$. It means that the new L-O semigroup converges faster than the
L-O semigroup. However, it does not necessarily mean that the new L-O semigroup converges
faster than the L-O semigroup in the sense of order.

Recall the notations for Diophantine vectors:
for $\varrho>n-1$ and $\alpha>0$, let

\[
\mathcal{D}(\varrho,\alpha)=\Big\{\beta\in \mathbb{S}^{n-1}\ |\
|\langle\beta,k\rangle|\geq\frac{\alpha}{|k|^\varrho},\ \forall
k\in\mathbb{Z}^n\backslash\{0\}\Big\},
\]
where $|k|=\sum_{i=1}^n|k_i|$. For (\ref{1-2}), the authors
\cite{W-Y} proved that given any frequency vector
$\omega\in\mathcal{D}(\varrho,\alpha)$, for each $u\in
C(\mathbb{T}^n,\mathbb{R}^1)$, there is a constant $K_3>0$ such
that

\begin{align}\label{1-4}
\|\tilde{T}^a_tu-\bar{u}\|_\infty\leq
K_3t^{-(1+\frac{4}{2\varrho+n})}, \quad \forall t>0.
\end{align}
In view of (\ref{1-3}) and (\ref{1-4}), we conclude that the new
L-O semigroup converges faster than the L-O semigroup in the sense
of order when the Aubry set $\tilde{\mathcal{A}}_0$ of the
Lagrangian system (\ref{1-2}) is a quasi-periodic invariant torus
with Diophantine frequency vector
$\omega\in\mathcal{D}(\varrho,\alpha)$.

The rest of the paper is organized as follows. Section 2 includes
some basic definitions and preliminary results. In Section 3 we
give the proof of Theorem \ref{th}.  Section 4 presents an example
(Example \ref{ex}) where the Aubry set of a time-independent positive
definite Lagrangian system consists of one
hyperbolic periodic orbit and the rate of convergence of the L-O
semigroup cannot be better than $O(\frac{1}{t})$.

\section{Preliminaries}
In this section we introduce the notations used in the sequel and
review some definitions and results of Mather and weak KAM
theories that we are going to use. In addition, we also prove two
preliminary lemmas.

In this paper, as is usual $\mathbb{S}^1=\mathbb{R}^1/\mathbb{Z}$,
whose universal cover is the Euclidean space $\mathbb{R}^1$. We
view $\mathbb{S}^1$ as a fundamental domain in $\mathbb{R}^1:
\bar{I}=[0,1]$ with the two endpoints identified. The unique
coordinate $s$ of a point in $\mathbb{S}^1$ will belong to
$I=[0,1)$. The standard universal covering projection
$\pi:\mathbb{R}^1\to\mathbb{S}^1$ takes the form
$\pi(\tilde{s})=[\tilde{s}]$, where $[\tilde{s}]=\tilde{s}$ mod 1,
denotes the fractional part of $\tilde{s}$
($\tilde{s}=\{\tilde{s}\}+[\tilde{s}]$, where $\{\tilde{s}\}$ is
the greatest integer not greater than $\tilde{s}$). $\|\cdot\|$
denotes the usual Euclidean norm.

As done by Mather in \cite{Mat93}, it is convenient to introduce,
for all $t'\geq t$ and $x$, $y\in M$, the following quantity:

\[
F_{t,t'}(x,y)=\inf_\gamma\int_t^{t'}L(\gamma(s),\dot{\gamma}(s),s)ds,
\]
where the infimum is taken over the continuous and piecewise $C^1$
paths $\gamma:[t,t']\to M$ such that $\gamma(t)=x$ and
$\gamma(t')=y$.

Following Ma$\mathrm{\tilde{n}}\mathrm{\acute{e}}$ \cite{Man97}
and Mather \cite{Mat93}, define the action potential and the
extended Peierls barrier as follows.

\vskip0.1cm

\noindent {\em Action Potential}: for each
$(s,s')\in\mathbb{S}^1\times\mathbb{S}^1$, let

\[
\Phi_{s,s'}(x,x')=\inf F_{t,t'}(x,x')
\]
for all $(x,x')\in M\times M$, where the infimum is taken on the
set of $(t,t')\in\mathbb{R}^2$ such that $s=[t]$, $s'=[t']$ and
$t'\geq t+1$.

\vskip0.1cm

\noindent {\em Extended Peierls Barrier}: for each
$(s,s')\in\mathbb{S}^1\times\mathbb{S}^1$, let

\begin{align*}
 h_{s,s'}(x,x')=\liminf_{t'-t\to+\infty}F_{t,t'}(x,x')
\end{align*}
for all $(x,x')\in M\times M$, where the liminf is restricted to
the set of $(t,t')\in\mathbb{R}^2$ such that $s=[t]$, $s'=[t']$.
It can be shown that the extended Peierls barrier $h$ is Lipschitz
(see \cite{Con}).

\vskip0.1cm

A continuous and piecewise $C^1$ curve $\gamma:\mathbb{R}^1\to M$
is called global semi-static if

\[
\int_t^{t'}L(\gamma,\dot{\gamma},s)ds=\Phi_{[t],[t']}(\gamma(t),\gamma(t'))
\]
for all $[t,t']\subset\mathbb{R}^1$. An orbit
$(\gamma(s),\dot{\gamma}(s),[s])$ is called global semi-static if
$\gamma$ is a global semi-static curve. The
Ma$\mathrm{\tilde{n}}\mathrm{\acute{e}}$ set
$\tilde{\mathcal{N}}_0$ is the union in $TM\times\mathbb{S}^1$ of
the images of global semi-static orbits.

For each $n\in\mathbb{N}$ and each $(\tau,\tau',x,x')\in
[0,1]\times[0,1]\times M\times M$, let

\[
\mathcal{F}_n(\tau,\tau',x,x')=\inf_{k\in\mathbb{N} \atop n\leq
k\leq 2n}F_{\tau,\tau'+k}(x,x').
\]
Then from Proposition 3.5 in \cite{W-Y},

\begin{align}\label{2-1}
\lim_{n\to+\infty}\mathcal{F}_n(\tau,\tau',x,x')=h_{[\tau],[\tau']}(x,x')
\end{align}
uniformly on  $(\tau,\tau',x,x')\in[0,1]\times[0,1]\times M\times
M$.

Now we prove a preliminary result:

\begin{lemma}\label{le2-1} For each $n\in\mathbb{N}$ and each
$\tau\in[0,1]$,

\begin{itemize}
    \item [(1)] $h_{0,0}(x,z)\leq F_{0,n}(x,y)+h_{0,0}(y,z), \quad \forall
                 x,\ y,\ z\in M$;\\
    \item [(2)] $h_{0,[\tau]}(x,z)\leq h_{0,0}(x,y)+F_{0,n+\tau}(y,z), \quad
                 \forall x,\ y,\ z\in M$.
\end{itemize}
\end{lemma}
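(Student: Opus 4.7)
The plan is to prove both inequalities by a direct concatenation argument, exploiting the 1-periodicity of $L$ (hypothesis (H1)) to translate action-minimizing segments along the time axis without changing their cost.

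For part (1), I start from the definition $h_{0,0}(y,z)=\liminf_{t'-t\to+\infty}F_{t,t'}(y,z)$, where the liminf is restricted to $t,t'\in\mathbb{Z}$. Pick sequences of integers $t_k\le t'_k$ with $t'_k-t_k\to+\infty$ and $F_{t_k,t'_k}(y,z)\to h_{0,0}(y,z)$. By 1-periodicity, $F_{t_k,t'_k}(y,z)=F_{n,n+(t'_k-t_k)}(y,z)$, since one can shift any admissible path by the integer $n-t_k$. Given $\varepsilon>0$, pick a continuous, piecewise $C^1$ path $\gamma_1:[0,n]\to M$ from $x$ to $y$ with action within $\varepsilon$ of $F_{0,n}(x,y)$, and a path $\gamma_2:[n,n+(t'_k-t_k)]\to M$ from $y$ to $z$ realizing (up to $\varepsilon$) the value $F_{n,n+(t'_k-t_k)}(y,z)$. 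Concatenating them gives a competitor for $F_{0,\,n+(t'_k-t_k)}(x,z)$, so
\[
F_{0,\,n+(t'_k-t_k)}(x,z)\le F_{0,n}(x,y)+F_{t_k,t'_k}(y,z)+2\varepsilon.
\]
Since $n+(t'_k-t_k)\in\mathbb{Z}$ and tends to $+\infty$, taking the liminf in $k$ and letting $\varepsilon\to 0$ yields $h_{0,0}(x,z)\le F_{0,n}(x,y)+h_{0,0}(y,z)$.

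Part (2) is completely analogous, but the liminf is applied to the first factor instead of the second. Choose integer sequences $s_k\le s'_k$ with $s'_k-s_k\to+\infty$ and $F_{s_k,s'_k}(x,y)\to h_{0,0}(x,y)$; by periodicity, $F_{s_k,s'_k}(x,y)=F_{0,\,s'_k-s_k}(x,y)$. Concatenate a near-minimizer of $F_{0,s'_k-s_k}(x,y)$ on $[0,s'_k-s_k]$ with a shifted near-minimizer of $F_{0,n+\tau}(y,z)$ on $[s'_k-s_k,\,s'_k-s_k+n+\tau]$ — the shift is by the integer $s'_k-s_k$, so periodicity preserves the action. This produces a competitor for $F_{0,\,s'_k-s_k+n+\tau}(x,z)$, and since the final time has fractional part $[\tau]$ and the length tends to infinity, the liminf bound gives $h_{0,[\tau]}(x,z)\le h_{0,0}(x,y)+F_{0,n+\tau}(y,z)$.

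No serious obstacle is expected; the only delicate point is the bookkeeping of the time-shifts, which must be by integers so that 1-periodicity of $L$ leaves the action invariant and so that the fractional parts of the endpoints remain correct ($0$ in (1); $0$ and $[\tau]$ in (2)). Everything else is the standard subadditive concatenation of action-minimizing curves together with passage to the liminf in the definition of the extended Peierls barrier.
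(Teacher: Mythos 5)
Your argument is correct and is essentially the paper's proof: both rely on the subadditivity of $F_{t,t'}$ under concatenation, the $1$-periodicity of $L$ to shift the time interval of the ``free'' segment by an integer, and passing to the $\liminf$ in the definition of the extended Peierls barrier. The paper merely compresses what you spell out by immediately writing $h_{0,0}=\liminf_{k\in\mathbb{N},\,k\to+\infty}F_{0,k}$ and applying $F_{0,n+k_i}\le F_{0,n}+F_{n,n+k_i}$ (resp.\ $F_{0,\tau+n+k_i}\le F_{0,k_i}+F_{k_i,\tau+n+k_i}$), with periodicity supplying $F_{n,n+k_i}=F_{0,k_i}$ and $F_{k_i,\tau+n+k_i}=F_{0,n+\tau}$.
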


\begin{proof}
(1) Since $h_{0,0}(y,z)=\liminf_{k\in\mathbb{N} \atop
k\to+\infty}F_{0,k}(y,z)$, then there exist
$\{k_i\}_{i=1}^{+\infty}\subset\mathbb{N}$ such that
$k_i\to+\infty$ and $F_{0,k_i}(y,z)\to h_{0,0}(y,z)$ as
$i\to+\infty$. For each $n\in\mathbb{N}$ and each $k_i$, in view
of the definition of $F_{t,t'}$, we have

\[
F_{0,n+k_i}(x,z)\leq F_{0,n}(x,y)+F_{n,n+k_i}(y,z).
\]
Hence,

\[
h_{0,0}(x,z)\leq\liminf_{i\to+\infty}F_{0,n+k_i}(x,z)\leq
F_{0,n}(x,y)+h_{0,0}(y,z).
\]
(2) Since $h_{0,0}(x,y)=\liminf_{k\in\mathbb{N} \atop
k\to+\infty}F_{0,k}(x,y)$, then there exist
$\{k_i\}_{i=1}^{+\infty}\subset\mathbb{N}$ such that
$k_i\to+\infty$ and $F_{0,k_i}(x,y)\to h_{0,0}(x,y)$ as
$i\to+\infty$. For each $n\in\mathbb{N}$, each $\tau\in[0,1]$ and
each $k_i$, we have

\[
F_{0,\tau+n+k_i}(x,z)\leq F_{0,k_i}(x,y)+F_{k_i,\tau+n+k_i}(y,z).
\]
It follows that

\[
h_{0,[\tau]}(x,z)\leq\liminf_{i\to+\infty}F_{0,\tau+n+k_i}(x,z)\leq
h_{0,0}(x,y)+F_{0,n+\tau}(y,z).
\]
\end{proof}

Following Fathi \cite{Fat1}, as done by Contreras et al. in
\cite{Con}, we give the definition of the weak KAM solution as
follows.

\begin{definition}
A backward weak KAM solution of the Hamilton-Jacobi equation
(\ref{1-1}) is a function $w:M\times\mathbb{S}^1\to\mathbb{R}^1$
such that
\begin{itemize}
    \item [(1)] $w$ is dominated by $L$, i.e.,
               \[
               w(x_1,s_1)-w(x_2,s_2)\leq\Phi_{s_2,s_1}(x_2,x_1), \quad
               \forall (x_1,s_1),\ (x_2,s_2)\in M\times\mathbb{S}^1.
               \]
    \item [(2)] For every $(x,s)\in M\times\mathbb{S}^1$ there
               exists a curve $\gamma:(-\infty,\tilde{s}]\to M$ with
               $\gamma(\tilde{s})=x$ and $[\tilde{s}]=s$ such that
               \[
               w(x,s)-w(\gamma(t),[t])=\int_t^{\tilde{s}}L(\gamma(\sigma),\dot{\gamma}(\sigma),\sigma)d\sigma,\quad
               \forall t\in(-\infty,\tilde{s}].
               \]
\end{itemize}

Similarly, we say that $w:M\times\mathbb{S}^1\to\mathbb{R}^1$ is a
forward weak KAM solution of (\ref{1-1}) if $w$ is dominated by
$L$ and for every $(x,s)\in M\times\mathbb{S}^1$ there exists a
curve $\gamma:[\tilde{s},+\infty)\to M$ with $\gamma(\tilde{s})=x$
and $[\tilde{s}]=s$ such that
$w(\gamma(t),[t])-w(x,s)=\int^t_{\tilde{s}}L(\gamma(\sigma),\dot{\gamma}(\sigma),\sigma)d\sigma,
\forall t\in[\tilde{s},+\infty)$.
\end{definition}

We denote by $\mathcal{S}_-$ ($\mathcal{S}_+$) the set of backward
(forward) weak KAM solutions. The following well-known result
\cite{Con} will be used later.

\begin{lemma}\label{le2-2}
Given $(x_0,s_0)\in M\times\mathbb{S}^1$, define

\[
w^*(x,s):=h_{s_0,s}(x_0,x),\quad w_*(x,s):=-h_{s,s_0}(x,x_0)
\]
for $(x,s)\in M\times\mathbb{S}^1$. Then $w^*\in\mathcal{S}_-$,
$w_*\in \mathcal{S}_+$.
\end{lemma}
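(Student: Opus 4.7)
The plan is to establish the statement for $w^*$ in detail and observe that the assertion for $w_*$ follows by a symmetric argument in which the roles of past and future are exchanged; throughout, I will exploit that 1-periodicity (H1) lets me translate curves by integer time shifts without altering their actions.

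First, I would show that $w^*$ is dominated by $L$, i.e., $h_{s_0,s_1}(x_0,x_1)\leq h_{s_0,s_2}(x_0,x_2)+\Phi_{s_2,s_1}(x_2,x_1)$ for all $(x_1,s_1),(x_2,s_2)\in M\times\mathbb{S}^1$. Fix $\varepsilon>0$ and pick (i) a near-minimizing curve realizing $h_{s_0,s_2}(x_0,x_2)$ up to $\varepsilon$ on some $[t_0,t_N]$ with $[t_0]=s_0$, $[t_N]=s_2$, and $t_N-t_0$ arbitrarily large; and (ii) a near-minimizer $\eta$ for $\Phi_{s_2,s_1}(x_2,x_1)$ on $[t,t']$ with $[t]=s_2$, $[t']=s_1$, $t'\geq t+1$. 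Since $[t_N]=[t]=s_2$, the shift amount $m=t_N-t$ is an integer; by (H1) the translated curve $\eta(\cdot-m)$ retains the same action, and concatenating it with the first curve produces a competitor for $F_{t_0,\,t_N+(t'-t)}(x_0,x_1)$. Taking the liminf as $t_N-t_0\to\infty$ and then letting $\varepsilon\to 0$ yields the desired triangle inequality, in exact analogy with the proof of Lemma \ref{le2-1}.

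Next, to construct a backward calibrated curve through $(x,s)$, fix $\tilde s\in\mathbb{R}^1$ with $[\tilde s]=s$ and choose Tonelli minimizers $\gamma^i:[t_i,t'_i]\to M$ (existing by (H2)-(H3)) from $x_0$ to $x$ with $[t_i]=s_0$, $[t'_i]=s$, $t'_i-t_i\to+\infty$, and action tending to $h_{s_0,s}(x_0,x)$. The integer shift $\tilde\gamma^i(\sigma):=\gamma^i(\sigma+t'_i-\tilde s)$ produces minimizers on $[T_i,\tilde s]$ with the same action and $T_i\to-\infty$. Uniform a priori velocity bounds for Tonelli minimizers on any fixed compact subinterval $[a,\tilde s]$, combined with Arzelà-Ascoli and a diagonal extraction, yield a subsequence converging to a curve $\gamma:(-\infty,\tilde s]\to M$ with $\gamma(\tilde s)=x$; this limit $\gamma$ is itself a Tonelli minimizer on every compact subinterval because it is a uniform limit of minimizers of a Tonelli Lagrangian.

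Finally, to verify the calibration identity at arbitrary $t<\tilde s$, I use Bellman splitting along $\tilde\gamma^i$:
\[
F_{T_i,\tilde s}(x_0,x)=F_{T_i,t}\big(x_0,\tilde\gamma^i(t)\big)+F_{t,\tilde s}\big(\tilde\gamma^i(t),x\big).
\]
The left side tends to $h_{s_0,s}(x_0,x)$. For the second summand, combining $\tilde\gamma^i(t)\to\gamma(t)$ with continuity of $F_{t,\tilde s}(\cdot,x)$ in its first argument and the minimality of $\gamma$ on $[t,\tilde s]$ gives the limit $\int_t^{\tilde s}L(\gamma,\dot\gamma,\sigma)\,d\sigma$. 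Hence $F_{T_i,t}(x_0,\tilde\gamma^i(t))\to h_{s_0,s}(x_0,x)-\int_t^{\tilde s}L(\gamma,\dot\gamma,\sigma)\,d\sigma$. Using the uniform Lipschitz dependence of $F_{T,t}(x_0,\cdot)$ on its second argument for $t-T\geq 1$ (standard consequence of (H2)-(H3), independent of $T$) together with the elementary fact that the liminf along a subsequence dominates the full liminf, this limit is bounded below by $h_{s_0,[t]}(x_0,\gamma(t))=w^*(\gamma(t),[t])$, and thus
\[
w^*(x,s)-w^*(\gamma(t),[t])\geq\int_t^{\tilde s}L(\gamma,\dot\gamma,\sigma)\,d\sigma.
\]
The reverse inequality is exactly the domination of the first step applied to the pair $(\gamma(t),[t])$ and $(x,s)$, since $F_{t,\tilde s}(\gamma(t),x)=\int_t^{\tilde s}L(\gamma,\dot\gamma,\sigma)\,d\sigma$. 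The hard part will be the Arzelà-Ascoli step: securing uniform velocity bounds for the shifted minimizers on every compact subinterval independently of the total length $\tilde s-T_i\to\infty$, and confirming that the limit curve inherits minimality on each compact subinterval. The claim for $w_*$ is then obtained by running the same three steps with minimizing sequences for $h_{s,s_0}(x,x_0)$ on intervals $[\tilde s,\tilde t_i]$, $\tilde t_i\to+\infty$, yielding a forward calibrated curve.
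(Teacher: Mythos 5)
The paper does not prove this lemma at all: it is quoted as a known result from Contreras--Iturriaga--S\'anchez Morgado \cite{Con}, so there is no internal proof to compare with. Your argument is the standard weak KAM construction and is essentially sound: the domination inequality by concatenating a long near-minimizer for $h_{s_0,s_2}(x_0,x_2)$ with an integer-shifted near-minimizer for $\Phi_{s_2,s_1}(x_2,x_1)$ (legitimate because $[t_N]=[t]$ forces an integer shift, so (H1) preserves the action), the extraction of a backward calibrated curve from shifted Tonelli minimizers via the a priori compactness of minimizers on intervals of length at least one (this is exactly Lemma 3.4 of \cite{W-Y}, which the paper itself invokes, so the step you call ``hard'' is standard), and the calibration identity via the Bellman splitting $F_{T_i,\tilde s}=F_{T_i,t}+F_{t,\tilde s}$ together with the uniform Lipschitz constant $D_{\mathrm{Lip}}$ of $F$ and the fact that a liminf along a subsequence dominates the liminf defining $h$.

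One small imprecision deserves fixing: for the reverse inequality you invoke domination, i.e.\ $w^*(x,s)-w^*(\gamma(t),[t])\leq\Phi_{[t],s}(\gamma(t),x)\leq F_{t,\tilde s}(\gamma(t),x)$, but the second inequality uses the definition of $\Phi$, which only ranges over time intervals of length at least $1$; it therefore fails as written when $\tilde s-1<t<\tilde s$. The remedy is immediate and uses nothing beyond what you already did: prove directly, by concatenating a minimizing sequence for $h_{s_0,[t]}(x_0,\gamma(t))$ with the (integer-shifted) arc $\gamma|_{[t,\tilde s]}$, that $h_{s_0,s}(x_0,x)\leq h_{s_0,[t]}(x_0,\gamma(t))+F_{t,\tilde s}(\gamma(t),x)$ for every $t\leq\tilde s$ with no lower bound on $\tilde s-t$; this is precisely the shape of Lemma \ref{le2-1}(2) in the paper, whose proof imposes no such length restriction. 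With that substitution (and its mirror image for $w_*$), your proof is complete.
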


Define the projected Aubry set $\mathcal{A}_0$ as follows:

\[
\mathcal{A}_0:=\{(x,s)\in M\times\mathbb{S}^1\ |\
h_{s,s}(x,x)=0\}.
\]
Note that $\mathcal{A}_0=\Pi\tilde{\mathcal{A}_0}$, where
$\Pi:TM\times\mathbb{S}^1\to M\times\mathbb{S}^1$ denotes the
projection and $\tilde{\mathcal{A}_0}$ denotes the Aubry set in
$TM\times\mathbb{S}^1$. Define an equivalence relation on
$\mathcal{A}_0$ by saying that $(x_1,s_1)$ and $(x_2,s_2)$ are
equivalent if and only if

\begin{align*}
\Phi_{s_1,s_2}(x_1,x_2)+\Phi_{s_2,s_1}(x_2,x_1)=0.
\end{align*}

The equivalent classes of this relation are called static classes.
Let $\mathrm A$ be the set of static classes. For each static
class $\Gamma\in \mathrm A$ choose a point $(x,0)\in\Gamma$ and
let $\mathbb{A}_0$ be the set of such points. Since the Lagrangian
$L$ in Theorem \ref{th} satisfies (H5), then $\mathbb{A}_0$ consists
of only one point, denoted by $(p,0)\in\mathcal{A}_0$.

From a result of Contreras et al. \cite{Con}, for each backward
weak KAM solution $w$ of (\ref{1-1}), we have

\begin{align}\label{2-2}
w(x,s)=\min_{(q,0)\in\mathbb{A}_0}(w(q,0)+h_{0,s}(q,x))=w(p,0)+h_{0,s}(p,x)
\end{align}
for all $(x,s)\in M\times\mathbb{S}^1$.

Given $u\in C(M,\mathbb{R}^1)$, for each $n\in\mathbb{N}$, each
$\tau\in[0,1]$ and each $x\in M$,

\begin{align}\label{2-3}
\tilde{T}_n^\tau u(x)= \inf_{k\in\mathbb{N} \atop n\leq k\leq
2n}\inf_{\gamma}\Big\{u(\gamma(0))+\int_{0}^{\tau+k}
L(\gamma(s),\dot{\gamma}(s),s)ds\Big\},
\end{align}
where the second infimum is taken among the continuous and
piecewise $C^1$ paths $\gamma:[0,\tau+k]\rightarrow M$ with
$\gamma(\tau+k)=x$. In view of (\ref{2-3}), it is easy to see that
there exist $n\leq k_{x,\tau,n}\leq2n$,
$k_{x,\tau,n}\in\mathbb{N}$ and a minimizing extremal curve
$\gamma_{x,\tau,n}:[0,\tau+k_{x,\tau,n}]\to M$ such that
$\gamma_{x,\tau,n}(\tau+k_{x,\tau,n})=x$ and

\[
\tilde{T}_n^\tau
u(x)=u(\gamma_{x,\tau,n}(0))+\int_{0}^{\tau+k_{x,\tau,n}}
L(\gamma_{x,\tau,n}(s),\dot{\gamma}_{x,\tau,n}(s),s)ds.
\]
Let

\[
A(\gamma_{x,\tau,n}):=\int_{0}^{\tau+k_{x,\tau,n}}
L(\gamma_{x,\tau,n}(s),\dot{\gamma}_{x,\tau,n}(s),s)ds.
\]
Obviously, we have

\begin{align}\label{2-4}
A(\gamma_{x,\tau,n})=F_{0,\tau+k_{x,\tau,n}}(\gamma_{x,\tau,n}(0),x)=\mathcal{F}_n(0,\tau,\gamma_{x,\tau,n}(0),x).
\end{align}
The following result for minimizers $\gamma_{x,\tau,n}$ will be
used in the proof of Theorem \ref{th}.

\begin{lemma}\label{le2-3}
Under the assumptions of Theorem \ref{th}, let $W$ be a neighborhood of the Aubry set $\tilde{\mathcal{A}_0}$
in $TM\times\mathbb{S}^1$. Then there exists $T>0$ such that if
$n\geq T$, $n\in\mathbb{N}$, then

\[
\Big(\gamma_{x,\tau,n}(s),\dot{\gamma}_{x,\tau,n}(s),[s]\Big)\Big|_{[\frac{n}{3},\frac{2n}{3}]}\subset
W, \quad \forall (x,\tau)\in M\times [0,1].
\]
\end{lemma}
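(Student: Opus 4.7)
My plan is a compactness-and-contradiction argument. Assume the conclusion fails; then for some neighborhood $W$ of $\tilde{\mathcal{A}}_0$ there exist $n_j \to \infty$, $(x_j, \tau_j) \in M \times [0,1]$, and $s_j \in [n_j/3, 2n_j/3]$ such that, writing $\gamma_j := \gamma_{x_j, \tau_j, n_j}$ and $T_j := \tau_j + k_{x_j, \tau_j, n_j} \in [n_j, 2n_j + 1]$, one has $(\gamma_j(s_j), \dot\gamma_j(s_j), [s_j]) \notin W$ for every $j$. A uniform bound on the total actions $A(\gamma_j) = \mathcal{F}_{n_j}(0, \tau_j, \gamma_j(0), x_j)$ follows from (\ref{2-1}) and the Lipschitz continuity of $h$; combined with (H3) and the standard a priori Lipschitz estimate for minimizers of superlinear, strictly convex Lagrangians, this yields a uniform $L^\infty$ bound on $\|\dot\gamma_j\|$ across $[0, T_j]$.

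Next I would translate each minimizer so as to center the putative ``bad'' time at the origin: set $\xi_j(\sigma) := \gamma_j(s_j + \sigma)$ on $[-s_j,\, T_j - s_j]$. Each $\xi_j$ solves the Euler--Lagrange equation for the shifted Lagrangian $L(\cdot, \cdot, \cdot + s_j)$, and since $s_j \geq n_j/3 \to \infty$ and $T_j - s_j \geq n_j/3 \to \infty$, the domains exhaust $\mathbb{R}$. By the uniform Lipschitz bound and the compactness of $M \times [0,1]$, I pass to a subsequence along which $[s_j] \to \sigma^*$, $\tau_j \to \tau^*$, $x_j \to x^*$, $(\gamma_j(s_j), \dot\gamma_j(s_j)) \to (q^*, w^*)$, and $\xi_j \to \xi$ in $C^1$ uniformly on every compact subset of $\mathbb{R}$, with $\xi$ solving the Euler--Lagrange equation for $L^*(x, v, t) := L(x, v, t + \sigma^*)$. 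By construction $(\xi(0), \dot\xi(0)) = (q^*, w^*)$, and $(q^*, w^*, \sigma^*)$ lies in the closed set $W^c$.

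The next step is to verify that $\xi$ is a global semi-static orbit for $L^*$. Each sub-segment $\xi_j|_{[a, b]}$ is a fixed-time minimizer between its endpoints (as a piece of the minimizer $\gamma_j$); passing to the limit using (\ref{2-1}) applied to $L^*$ (whose Peierls barrier is the appropriate time-shift of $h$), together with the outer flexibility from the infimum over $k \in [n_j, 2n_j]$ in the definition of $\tilde{T}_{n_j}^{\tau_j}$, one concludes that the action of $\xi|_{[a, b]}$ equals the action potential $\Phi^*_{[a], [b]}(\xi(a), \xi(b))$ for all $a < b$. Hence the orbit of $\xi$ lies in the Ma$\mathrm{\tilde{n}}\mathrm{\acute{e}}$ set $\tilde{\mathcal{N}}_0^*$ of $L^*$.

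The final and most delicate step is to promote ``$\xi$ semi-static'' to ``$(\xi(0), \dot\xi(0), [0])$ lies in the Aubry set $\tilde{\mathcal{A}}_0^*$'' (which under the time-shift by $\sigma^*$ corresponds to a point of $\tilde{\mathcal{A}}_0 \subset W$, yielding the desired contradiction). Under (H5), $\tilde{\mathcal{A}}_0^*$ is a single hyperbolic 1-periodic orbit $\mathcal{O}$, and any semi-static orbit has its $\alpha$- and $\omega$-limit sets contained in $\mathcal{O}$. The remaining possibility, that $\xi$ is a semi-static orbit homoclinic to $\mathcal{O}$ without lying on it, is where the hyperbolic structure and the specific form of the new L-O operator must be used in tandem: a homoclinic excursion of bounded length near $s_j$ in $\gamma_j$ could be cut out by adjusting $k$ inside $[n_j, 2n_j]$ to replace the excursion by an additional traversal along $\mathcal{O}$, strictly decreasing the action and contradicting the infimizing choice of $\gamma_j$. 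Carrying out this last exclusion rigorously is the principal technical obstacle of the proof.
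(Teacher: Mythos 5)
Your overall strategy matches the paper's exactly: argue by contradiction, use a priori compactness to extract a limit phase point $(q^*,w^*,\sigma^*)\notin W$, show that the Euler--Lagrange orbit through it is global semi-static, and deduce from (H5) that this orbit must actually lie in $\tilde{\mathcal{A}}_0\subset W$. Your time-shift to $L^*$ is a cosmetic rearrangement of the paper's choice to flow the limit point under $\phi^L$. However, you have located the real work in the wrong place, leaving the hard step unproved and laboring over an easy one.

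The semi-static step is where the paper spends almost its entire proof, and you merely assert it. Knowing that each $\xi_j|_{[a,b]}$ is a fixed-time minimizer passes to the limit only to the statement that $\xi|_{[a,b]}$ is a fixed-time minimizer, which is strictly weaker than semi-staticity (equality with $\Phi$, which infimizes over all admissible travel times). The upgrade requires a surgery argument: assume non-semi-staticity, splice a strictly cheaper segment into $\gamma_{n_i}$, repair the junctions with Tonelli minimizers (the Lipschitz constant $D_{\mathrm{Lip}}$ of $F_{t,t'}$ controls the errors), arrange the competitor's total duration to lie in a window $[m_i,2m_i]$ with $m_i\to\infty$, and then use $\mathcal{F}_{m_i}(0,\tau_{n_i},y_{n_i},x_{n_i})\to h_{0,[\tau_0]}(y_0,x_0)=\lim A(\gamma_{n_i})$ to contradict the strict action decrease. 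Your phrase about the ``outer flexibility from the infimum over $k$'' gestures at this but does not carry it out. Conversely, the step you call the ``principal technical obstacle'' --- excluding a semi-static orbit homoclinic to the hyperbolic periodic orbit but not lying on it --- is not an obstacle, and the excision surgery you sketch for it is unnecessary. It is a standard fact (the paper cites \cite{Con}) that a semi-static orbit whose $\alpha$- and $\omega$-limit sets lie in one and the same static class is itself static; with a single static class this gives $\tilde{\mathcal{N}}_0=\tilde{\mathcal{A}}_0$ outright. (Sketch: with $z_1\in\alpha(\gamma)$, $z_2\in\omega(\gamma)$ in the same class, semi-staticity gives $\Phi(z_1,z_2)=\Phi(z_1,\gamma(a))+\Phi(\gamma(a),\gamma(b))+\Phi(\gamma(b),z_2)$; adding $\Phi(z_2,z_1)$, using $\Phi(z_1,z_2)+\Phi(z_2,z_1)=0$ and the triangle inequality forces $\Phi(\gamma(a),\gamma(b))+\Phi(\gamma(b),\gamma(a))=0$.) So the only genuine gap is the semi-staticity step you hand-waved.
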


\begin{proof}
To prove the lemma, we argue by contradiction. For, otherwise,
there would be $\{n_i\}_{i=1}^{+\infty}\subset\mathbb{N}$ with
$n_i\to+\infty$ as $i\to+\infty$,
$\{(x_{n_i},\tau_{n_i})\}_{i=1}^{+\infty}\subset M\times [0,1]$,
$\{k_{n_i}\}_{i=1}^{+\infty}\subset\mathbb{N}$ with $n_i\leq
k_{n_i}\leq 2n_i$, a sequence
$\gamma_{n_i}:[0,\tau_{n_i}+k_{n_i}]\to M$, $i=1,2,\cdots$ of
minimizers satisfying $\gamma_{n_i}(\tau_{n_i}+k_{n_i})=x_{n_i}$
and $\tilde{T}_{n_i}^{\tau_{n_i}}
u(x_{n_i})=u(\gamma_{n_i}(0))+\int_{0}^{\tau_{n_i}+k_{n_i}}
L(\gamma_{n_i},\dot{\gamma}_{n_i},s)ds$, and
$\{t_{n_i}\}_{i=1}^{+\infty}$ with $\frac{n_i}{3}\leq
t_{n_i}\leq\frac{2n_i}{3}$ such that

\begin{align}\label{2-5}
\Big(\gamma_{n_i}(t_{n_i}),\dot{\gamma}_{n_i}(t_{n_i}),[t_{n_i}]\Big)\notin
W, \quad i=1,2,\cdots,
\end{align}
where we used $k_{n_i}$ and $\gamma_{n_i}$ to denote
$k_{x_{n_i},\tau_{n_i},n_i}$ and $\gamma_{x_{n_i},\tau_{n_i},n_i}$
respectively. For each positive integer $i$, we set
$y_{n_i}=\gamma_{n_i}(0)$. Passing as necessary to a subsequence,
we may suppose that $x_{n_i}\to x_0$, $y_{n_i}\to y_0$ and
$\tau_{n_i}\to \tau_0$ as $i\to+\infty$, where $x_0$, $y_0\in M$
and $\tau_0\in[0,1]$.

Since

\begin{align*}
\big|\mathcal{F}_{n_i}(0,\tau_{n_i},y_{n_i},x_{n_i})-h_{0,[\tau_0]}(y_0,x_0)\big|
& \leq
\big|\mathcal{F}_{n_i}(0,\tau_{n_i},y_{n_i},x_{n_i})-h_{0,[\tau_{n_i}]}(y_{n_i},x_{n_i})\big|\\
& +
\big|h_{0,[\tau_{n_i}]}(y_{n_i},x_{n_i})-h_{0,[\tau_0]}(y_{n_i},x_{n_i})\big|\\
& +
\big|h_{0,[\tau_0]}(y_{n_i},x_{n_i})-h_{0,[\tau_0]}(y_0,x_0)\big|,
\end{align*}
then from (\ref{2-1}) and the Lipschitz property of $h$, we have

\begin{align}\label{2-6}
\lim_{i\to+\infty}\mathcal{F}_{n_i}(0,\tau_{n_i},y_{n_i},x_{n_i})=h_{0,[\tau_0]}(y_0,x_0).
\end{align}
In view of (\ref{2-4}) and (\ref{2-6}), we have

\begin{align}\label{2-7}
\lim_{i\to+\infty}A(\gamma_{n_i})=h_{0,[\tau_0]}(y_0,x_0).
\end{align}

For each $i$, we set
$(\tilde{x}_{n_i},\dot{\tilde{x}}_{n_i},\sigma_{n_i})=(\gamma_{n_i}(t_{n_i}),\dot{\gamma}_{n_i}(t_{n_i}),[t_{n_i}])$.
By (\ref{2-5}),
$(\tilde{x}_{n_i},\dot{\tilde{x}}_{n_i},\sigma_{n_i})\notin W$,
$\forall i$. Since $\gamma_{n_i}$ are minimizing extremal curves,
using the a priori compactness Lemma 3.4 in \cite{W-Y}, we
conclude that
$\{(\tilde{x}_{n_i},\dot{\tilde{x}}_{n_i},\sigma_{n_i})\}_{i=1}^{+\infty}$
are contained in a compact subset of $TM\times\mathbb{S}^1$. So we
may assume upon passing if necessary to a subsequence that
$(\tilde{x}_{n_i},\dot{\tilde{x}}_{n_i},\sigma_{n_i})\to
(\tilde{x},\dot{\tilde{x}},\sigma)\in TM\times\mathbb{S}^1$ as
$i\to+\infty$. Since
$(\tilde{x}_{n_i},\dot{\tilde{x}}_{n_i},\sigma_{n_i})\notin W$,
$\forall i$, then
$(\tilde{x},\dot{\tilde{x}},\sigma)\notin\tilde{\mathcal{A}_0}$.

Let
$(\gamma(s),\dot{\gamma}(s),[s])=\phi^L_{s-\sigma}(\tilde{x},\dot{\tilde{x}},\sigma)$,
$s\in\mathbb{R}^1$. We assert that the orbit
$(\gamma(s),\dot{\gamma}(s),[s])$ is global semi-static, i.e.,
$\gamma$ is a global semi-static curve. If this assertion is true,
then $(\tilde{x},\dot{\tilde{x}},\sigma)\in
\tilde{\mathcal{N}}_0$. By our assumption that
$\tilde{\mathcal{A}}_0$ consists of one hyperbolic 1-periodic
orbit, it is easy to see that
$\tilde{\mathcal{M}}_0=\tilde{\mathcal{A}}_0=\tilde{\mathcal{N}}_0$.
Thus, we deduce that
$(\tilde{x},\dot{\tilde{x}},\sigma)\in\tilde{\mathcal{A}_0}$,
which is impossible since
$(\tilde{x},\dot{\tilde{x}},\sigma)\notin\tilde{\mathcal{A}_0}$.
This contradiction proves the lemma.

Based on the above arguments, it is sufficient to show that
$\gamma$ is a global semi-static curve. We prove it by
contradiction. Otherwise, there would be $j_1$, $j_2\in\mathbb{N}$
such that

\[
A(\gamma\big|_{[\sigma-j_1,\sigma+j_2]})>\Phi_{\sigma,\sigma}(\gamma(\sigma-j_1),\gamma(\sigma+j_2)).
\]
It implies that there exist $j'_1$, $j'_2\in\mathbb{N}$ with
$\sigma-j'_1+1\leq\sigma+j'_2$ and a minimizing curve
$\tilde{\gamma}:[\sigma-j'_1,\sigma+j'_2]\to M$ satisfying
$\tilde{\gamma}(\sigma-j'_1)=\gamma(\sigma-j_1)$ and
$\tilde{\gamma}(\sigma+j'_2)=\gamma(\sigma+j_2)$ such that

\[
A(\gamma\big|_{[\sigma-j_1,\sigma+j_2]})>A(\tilde{\gamma}\big|_{[\sigma-j'_1,\sigma+j'_2]}).
\]
Thus, there exists $\Delta>0$ such that

\begin{align}\label{2-8}
A(\tilde{\gamma}\big|_{[\sigma-j'_1,\sigma+j'_2]})\leq
A(\gamma\big|_{[\sigma-j_1,\sigma+j_2]})-\Delta.
\end{align}

Since $(\tilde{x}_{n_i},\dot{\tilde{x}}_{n_i},\sigma_{n_i})\to
(\tilde{x},\dot{\tilde{x}},\sigma)\in TM\times\mathbb{S}^1$ as
$i\to+\infty$, then, for every $\varepsilon>0$, by the differentiability
of the solutions of the Euler-Lagrange equation with respect to
initial values, we have

\begin{align}\label{2-9}
d\big((\gamma(s),\dot{\gamma}(s),[s]),(\gamma_{n_i}(s+t_{n_i}-\sigma),\dot{\gamma}_{n_i}(s+t_{n_i}-\sigma),[s+t_{n_i}-\sigma])\big)<\varepsilon,
\end{align}
for all $s\in[\sigma-j_1,\sigma+j_2]$ and $i$ large enough. Using the
periodicity of $L$, we have

\begin{align}\label{2-10}
A(\gamma_{n_i}\big|_{[t_{n_i}-j_1,t_{n_i}+j_2]})=\int_{\sigma-j_1}^{\sigma+j_2}L(\gamma_{n_i}(s+t_{n_i}-\sigma),\dot{\gamma}_{n_i}(s+t_{n_i}-\sigma),[s+t_{n_i}-\sigma])ds,
\end{align}
and

\begin{align}\label{2-11}
A(\gamma\big|_{[\sigma-j_1,\sigma+j_2]})=\int_{\sigma-j_1}^{\sigma+j_2}L(\gamma(s),\dot{\gamma}(s),[s])ds.
\end{align}
Combining (\ref{2-9}), (\ref{2-10}) and (\ref{2-11}), we have

\begin{align}\label{2-12}
\Big|A(\gamma_{n_i}\big|_{[t_{n_i}-j_1,t_{n_i}+j_2]})-A(\gamma\big|_{[\sigma-j_1,\sigma+j_2]})\big|\leq
C\varepsilon
\end{align}
for some constant $C>0$ independent of $\varepsilon$ and
sufficiently large $i$. Since $\varepsilon$ may be taken arbitrary
small, from (\ref{2-8}) and (\ref{2-12}) we obtain

\begin{align}\label{2-13}
A(\gamma_{n_i}\big|_{[t_{n_i}-j_1,t_{n_i}+j_2]})\geq
A(\gamma\big|_{[\sigma-j_1,\sigma+j_2]})-C\varepsilon\geq
A(\tilde{\gamma}\big|_{[\sigma-j'_1,\sigma+j'_2]})+\frac{3\Delta}{4},
\end{align}
provided $i$ is large enough.

We set $\bar{x}=\tilde{\gamma}(\sigma-j'_1)=\gamma(\sigma-j_1)$
and
$\underline{x}=\tilde{\gamma}(\sigma+j'_2)=\gamma(\sigma+j_2)$.
For $i$ large enough, consider the following curves on $M$. Let
$\alpha^1_i:[0,t_{n_i}-j_1]\to M$ with $\alpha^1_i(0)=y_{n_i}$ and
$\alpha^1_i(t_{n_i}-j_1)=\bar{x}$ be a Tonelli minimizer such that

\[
A(\alpha^1_i)=F_{0,t_{n_i}-j_1}(y_{n_i},\bar{x}).
\]
Let
$\alpha^2_i:[t_{n_i}-j_1+j'_1+j'_2,\tau_{n_i}+k_{n_i}-j_1-j_2+j'_1+j'_2]\to
M$ with $\alpha^2_i(t_{n_i}-j_1+j'_1+j'_2)=\underline{x}$ and
$\alpha^2_i(\tau_{n_i}+k_{n_i}-j_1-j_2+j'_1+j'_2)=x_{n_i}$ be a
Tonelli minimizer such that

\[
A(\alpha^2_i)=F_{t_{n_i}-j_1+j'_1+j'_2,\tau_{n_i}+k_{n_i}-j_1-j_2+j'_1+j'_2}(\underline{x},x_{n_i}).
\]
Let

\[
\tilde{\gamma}_{n_i}(s)= \left\{\begin{array}{ll}
          \alpha^1_i(s),\ & s\in[0,t_{n_i}-j_1],\\[2mm]
          \tilde{\gamma}(s-t_{n_i}+j_1+\sigma-j'_1),\ & s\in[t_{n_i}-j_1,t_{n_i}-j_1+j'_1+j'_2],\\[2mm]
          \alpha^2_i(s),\ & s\in[t_{n_i}-j_1+j'_1+j'_2,\tau_{n_i}+k_{n_i}-j_1-j_2+j'_1+j'_2].
\end{array}\right.
\]
It is clearly that
$\tilde{\gamma}_{n_i}:[0,\tau_{n_i}+k_{n_i}-j_1-j_2+j'_1+j'_2]\to
M$ is a continuous and piecewise $C^1$ curve connecting $y_{n_i}$
and $x_{n_i}$.

We set $\bar{x}_{n_i}=\gamma_{n_i}(t_{n_i}-j_1)$ and
$\underline{x}_{n_i}=\gamma_{n_i}(t_{n_i}+j_2)$. For $i$ large
enough, compare $A(\tilde{\gamma}_{n_i})$ with $A(\gamma_{n_i})$
as follows. In view of (\ref{2-9}), we have

\begin{align}\label{2-14}
\Big|A(\tilde{\gamma}_{n_i}\big|_{[0,t_{n_i}-j_1]})-A(\gamma_{n_i}\big|_{[0,t_{n_i}-j_1]})\Big|
=\Big|F_{0,t_{n_i}-j_1}(y_{n_i},\bar{x})-F_{0,t_{n_i}-j_1}(y_{n_i},\bar{x}_{n_i})\Big|\leq
D_{\mathrm{Lip}}\varepsilon,
\end{align}
where $D_{\mathrm{Lip}}>0$ is a Lipschitz constant of $F_{t,t'}$
which is independent of $t$, $t'$ with $t+1\leq t'$ \cite{Ber}.
Note that

\begin{align}\label{2-15}
\begin{split}
&A(\tilde{\gamma}_{n_i}\big|_{[t_{n_i}-j_1,t_{n_i}-j_1+j'_1+j'_2]})-A(\gamma_{n_i}\big|_{[t_{n_i}-j_1,t_{n_i}+j_2]})\\
&=\int_{\sigma-j'_1}^{\sigma+j'_2}L(\tilde{\gamma}(s),\dot{\tilde{\gamma}}(s),s+\sigma_{n_i}-\sigma)ds-A(\gamma_{n_i}\big|_{[t_{n_i}-j_1,t_{n_i}+j_2]}).
\end{split}
\end{align}
Since $\sigma_{n_i}\to\sigma$ as $i\to+\infty$, then

\begin{align}\label{2-16}
\Big|A(\tilde{\gamma}\big|_{[\sigma-j'_1,\sigma+j'_2]})-\int_{\sigma-j'_1}^{\sigma+j'_2}L(\tilde{\gamma}(s),\dot{\tilde{\gamma}}(s),s+\sigma_{n_i}-\sigma)ds\Big|
\leq\frac{\Delta}{4}
\end{align}
for $i$ large enough. Hence, from (\ref{2-13}), (\ref{2-15}) and
(\ref{2-16}) we have

\begin{align}\label{2-17}
A(\tilde{\gamma}_{n_i}\big|_{[t_{n_i}-j_1,t_{n_i}-j_1+j'_1+j'_2]})-A(\gamma_{n_i}\big|_{[t_{n_i}-j_1,t_{n_i}+j_2]})\leq-\frac{\Delta}{2}.
\end{align}
From the Lipschitz property of $F_{t,t'}$ and (\ref{2-9}), we find

\begin{align}\label{2-18}
\begin{split}
&
\Big|A(\tilde{\gamma}_{n_i}\big|_{[t_{n_i}-j_1+j'_1+j'_2,\tau_{n_i}+k_{n_i}-j_1-j_2+j'_1+j'_2]})-A(\gamma_{n_i}\big|_{[t_{n_i}+j_2,\tau_{n_i}+k_{n_i}]})\Big|\\
&
=\Big|F_{t_{n_i}-j_1+j'_1+j'_2,\tau_{n_i}+k_{n_i}-j_1-j_2+j'_1+j'_2}(\underline{x},x_{n_i})-F_{t_{n_i}+j_2,\tau_{n_i}+k_{n_i}}(\underline{x}_{n_i},x_{n_i})\Big|\\
& \leq D_{\mathrm{Lip}}\varepsilon.
\end{split}
\end{align}

Since $\varepsilon$ may be taken arbitrary small, from
(\ref{2-14}), (\ref{2-17}) and (\ref{2-18}), we have

\begin{align}\label{2-19}
A(\tilde{\gamma}_{n_i})\leq A(\gamma_{n_i})-\frac{\Delta}{4}
\end{align}
for $i$ large enough.

For each sufficiently large $i$, we choose $m_i\in\mathbb{N}$ such
that

\begin{align}\label{2-20}
m_i\leq k_{n_i}-j_1-j_2+j'_1+j'_2\leq 2m_i.
\end{align}
Since $n_i\leq k_{n_i}\leq 2n_i$, $n_i\to+\infty$ as
$i\to+\infty$, then $m_i\to+\infty$ as $i\to+\infty$. By
(\ref{2-20}), for each $i$ large enough, we have

\begin{align}\label{2-21}
A(\tilde{\gamma}_{n_i})\geq
F_{0,\tau_{n_i}+k_{n_i}-j_1-j_2+j'_1+j'_2}(y_{n_i},x_{n_i})\geq
\mathcal{F}_{m_i}(0,\tau_{n_i},y_{n_i},x_{n_i}).
\end{align}

Since

\begin{align*}
\big|\mathcal{F}_{m_i}(0,\tau_{n_i},y_{n_i},x_{n_i})-h_{0,[\tau_0]}(y_0,x_0)\big|
& \leq
\big|\mathcal{F}_{m_i}(0,\tau_{n_i},y_{n_i},x_{n_i})-h_{0,[\tau_{n_i}]}(y_{n_i},x_{n_i})\big|\\
& +
\big|h_{0,[\tau_{n_i}]}(y_{n_i},x_{n_i})-h_{0,[\tau_0]}(y_{n_i},x_{n_i})\big|\\
& +
\big|h_{0,[\tau_0]}(y_{n_i},x_{n_i})-h_{0,[\tau_0]}(y_0,x_0)\big|,
\end{align*}
then from (\ref{2-1}) and the Lipschitz property of $h$, we have

\begin{align}\label{2-22}
\lim_{i\to+\infty}\mathcal{F}_{m_i}(0,\tau_{n_i},y_{n_i},x_{n_i})=h_{0,[\tau_0]}(y_0,x_0).
\end{align}

Combining (\ref{2-7}), (\ref{2-19}), (\ref{2-21}) and
(\ref{2-22}), we have

\begin{align*}
h_{0,[\tau_0]}(y_0,x_0)-\frac{\Delta}{4}
&=\lim_{i\to+\infty}A(\gamma_{n_i})-\frac{\Delta}{4}\\
&\geq\liminf_{i\to+\infty}A(\tilde{\gamma}_{n_i})\\
&\geq\lim_{i\to+\infty}\mathcal{F}_{m_i}(0,\tau_{n_i},y_{n_i},x_{n_i})\\
&=h_{0,[\tau_0]}(y_0,x_0),
\end{align*}
a contradiction. This contradiction shows that $\gamma$ is global
semi-static.
\end{proof}

\begin{remark}
The above result is independent of $u\in C(M,\mathbb{R}^1)$.
Moreover, from the proof, it is easy to see that the conclusion of
Lemma \ref{le2-3} holds with $[\frac{n}{3},\frac{2n}{3}]$ replaced
by $[an,bn]$ for arbitrary $0<a<b<1$.
\end{remark}

\section{Proof of the main result}
In this section we prove Theorem \ref{th}. Let $(p,v_p,0)$ be the
unique point in $\tilde{\mathcal{A}}_0$ with
$\Pi(p,v_p,0)=(p,0)\in \mathcal{A}_0$, where
$\Pi:TM\times\mathbb{S}^1\to M\times\mathbb{S}^1$ denotes the
projection. By (H5) the Aubry set $\tilde{\mathcal{A}}_0$ consists
of one hyperbolic 1-periodic orbit, denoted by
$\Gamma:\phi^L_s(p,v_p,0)=(\gamma_p(s),\dot{\gamma}_p(s),[s])$,
$s\in\mathbb{R}^1$.

\vskip0.2cm

\noindent\emph{Proof of Theorem \ref{th}.} Our purpose is to show
that there exists $\rho>0$ such that for each $u\in
C(M,\mathbb{R}^1)$ there is $K>0$ such that the following two
inequalities hold.

\[
\bar{u}(x,[\tau])-U^u_n(x,\tau)\leq Ke^{-\rho n}, \quad \forall
n\in\mathbb{N},\ \forall (x,\tau)\in M\times[0,1]; \eqno
(\mathrm{I1})
\]

\[
U^u_n(x,\tau)-\bar{u}(x,[\tau])\leq Ke^{-\rho n}, \quad \forall
n\in\mathbb{N},\ \forall (x,\tau)\in M\times[0,1]. \eqno
(\mathrm{I2})
\]
Since the proof is rather long, it is convenient to divide it into
two steps.

\vskip0.1cm

Step 1. We first prove the inequality (I1). In view of Lemma
\ref{le2-2} and (\ref{2-2}), for any given $y\in M$,
$h_{0,\cdot}(y,\cdot)$ is a backward weak KAM solution of
(\ref{1-1}) and

\begin{align}\label{3-1}
h_{0,[\tau]}(y,x)=h_{0,0}(y,p)+h_{0,[\tau]}(p,x)
\end{align}
for all $(x,\tau)\in M\times[0,1]$. Given $u\in C(M,\mathbb{R}^1)$
and $(x,\tau)\in M\times[0,1]$, from Theorem \ref{oth} and
(\ref{3-1}) we have

\begin{align}\label{3-2}
\bar{u}(x,[\tau])=\inf_{y\in M}(u(y)+h_{0,[\tau]}(y,x))=\inf_{y\in
M}(u(y)+h_{0,0}(y,p)+h_{0,[\tau]}(p,x)).
\end{align}

By the arguments in Section 2, for each $n\in\mathbb{N}$ there
exist $n\leq k_{x,\tau,n}\leq2n$, $k_{x,\tau,n}\in\mathbb{N}$ and
a minimizing extremal curve
$\gamma_{x,\tau,n}:[0,\tau+k_{x,\tau,n}]\to M$ such that
$\gamma_{x,\tau,n}(\tau+k_{x,\tau,n})=x$ and

\begin{align}\label{3-3}
\tilde{T}_n^\tau
u(x)=u(\gamma_{x,\tau,n}(0))+\int_{0}^{\tau+k_{x,\tau,n}}
L(\gamma_{x,\tau,n}(s),\dot{\gamma}_{x,\tau,n}(s),s)ds.
\end{align}
In what follows we use $k_n$ and $\gamma_n$ to denote
$k_{x,\tau,n}$ and $\gamma_{x,\tau,n}$ respectively.

From (\ref{3-2}) and Lemma \ref{le2-1}, we have

\begin{align}\label{3-4}
\begin{split}
\bar{u}(x,[\tau])
& \leq
u(\gamma_n(0))+h_{0,0}(\gamma_n(0),p)+h_{0,[\tau]}(p,x)\\
& \leq
u(\gamma_n(0))+F_{0,n_1}(\gamma_n(0),\gamma_n(s))+h_{0,0}(\gamma_n(s),p)\\
&+h_{0,0}(p,\gamma_n(s))+F_{0,n_2+\tau}(\gamma_n(s),x)
\end{split}
\end{align}
for all $s\in[0,\tau+k_n]$ and all $n_1$, $n_2\in\mathbb{N}$. For
$n\in\mathbb{N}$ large enough, let
$j_n=\{\frac{2n}{3}\}-\{\frac{n}{3}\}-1$. Taking
$n_1=\{\frac{j_n}{2}\}+\{\frac{n}{3}\}+1$, $s=n_1$ and
$n_2=k_n-n_1$, by (\ref{3-4}) we obtain

\begin{align}\label{3-5}
\begin{split}
\bar{u}(x,[\tau])\leq
u(\gamma_n(0))+\int_0^{\tau+k_n}L(\gamma_n,\dot{\gamma}_n,s)ds
+2C_{\mathrm{Lip}}d\Big(\gamma_n\big(\{\frac{j_n}{2}\}+\{\frac{n}{3}\}+1\big),p\Big),
\end{split}
\end{align}
where $C_{\mathrm{Lip}}$ is a Lipschitz constant of $h$. From
(\ref{3-3}) and (\ref{3-5}) we have

\begin{align}\label{3-6}
\bar{u}(x,[\tau])-U^u_n(x,\tau)\leq
2C_{\mathrm{Lip}}d\Big(\gamma_n\big(\{\frac{j_n}{2}\}+\{\frac{n}{3}\}+1\big),p\Big).
\end{align}

We now estimate the term in the right-hand side of (\ref{3-6}).
Consider the Poincar\'e map for the time-periodic Lagrangian
system $L$:

\[
\varphi_{1,0}: TM\to TM, \quad
(x_0,v_0)\mapsto\varphi_{1,0}(x_0,v_0),
\]
where $\varphi_{t,0}(x_0,v_0)=(x(t),\dot{x}(t))$ and $x(t)$
denotes the solution to the Euler-Lagrange equation with initial
conditions $x(0)=x_0$, $\dot{x}(0)=v_0$. Obviously, $\phi^L_t(x_0,v_0,0)
=(\varphi_{t,0}(x_0,v_0),[t])$.  It is easy to see that
$(p,v_p)$ is a hyperbolic fixed point of $\varphi_{1,0}$.
According to the Hartman-Grobman Theorem the Poincar\'e map
$\varphi_{1,0}$ is locally conjugate to its linear part at the
hyperbolic fixed point $(p,v_p)$. More precisely, there exist a
neighborhood $V(p,v_p)$ of $(p,v_p)$ in $TM$ as well as a
neighborhood $U(0)$ of 0 in $T_{(p,v_p)}(TM)$ and a homeomorphism
$f:V(p,v_p)\to U(0)$, such that

\begin{align}\label{3-7}
D\varphi_{1,0}(p,v_p)\circ f=f\circ\varphi_{1,0}.
\end{align}
Furthermore, there exists $0<\alpha<1$ such that $f$ and $f^{-1}$
are $\alpha$-H$\mathrm{\ddot{o}}$lder continuous \cite{Bar}
\cite{Bel}. Denote for brevity $P=(p,v_p)$. As the problem here is
a local one we can, using a local chart, suppose that
$\varphi_{1,0}$ is a map from $\mathbb{R}^{2m}$ to itself with $P$
as a hyperbolic fixed point.

Let $B(P)$ be a sufficiently small neighborhood of $P$ in
$\mathbb{R}^{2m}$ such that $B(P)\subset V(P)=V(p,v_p)$. We choose
a tubular neighborhood $W_\Gamma$ of $\Gamma$ such that for each
$(q,v,[s])\in \Gamma$, $d((q,v,[s]),\partial W_\Gamma)=\delta$,
where $\partial W_\Gamma$ denotes the boundary of $W_\Gamma$ and
$\delta$ is a positive constant small enough such that for each
$(q,v,0)\in W_\Gamma$, we have $(q,v)\in B(P)$. For the tubular
neighborhood $W_\Gamma$, applying Lemma \ref{le2-3}, there exists
$T>0$ such that for $n\in\mathbb{N}$ with $n\geq T$, we have

\[
\Big(\gamma_n(s),\dot{\gamma}_n(s),[s]\Big)\Big|_{[\frac{n}{3},\frac{2n}{3}]}\subset
W_\Gamma.
\]
It follows that

\[
\Big(\gamma_n\big(\{\frac{n}{3}\}+1\big),\dot{\gamma}_n\big(\{\frac{n}{3}\}+1\big),0\Big),\cdots,\Big(\gamma_n\big(\{\frac{2n}{3}\}\big),\dot{\gamma}_n\big(\{\frac{2n}{3}\}\big),0\Big)\in
W_\Gamma.
\]
Thus, we have

\[
\Big(\gamma_n\big(\{\frac{n}{3}\}+1\big),\dot{\gamma}_n\big(\{\frac{n}{3}\}+1\big)\Big),\cdots,\Big(\gamma_n\big(\{\frac{2n}{3}\}\big),\dot{\gamma}_n\big(\{\frac{2n}{3}\}\big)\Big)\in
B(P),
\]
i.e.,

\begin{align}\label{3-8}
\varphi_{1,0}^{\{\frac{n}{3}\}+1}(P^n_1),\cdots,\varphi_{1,0}^{\{\frac{2n}{3}\}}(P^n_1)\in
B(P),
\end{align}
where $P^n_1=(\gamma_n(0),\dot{\gamma}_n(0))$. By (\ref{3-7}) and
(\ref{3-8}) we have

\[
Af(P^n_2)=f\circ\varphi_{1,0}^{\{\frac{n}{3}\}+2}(P^n_1),\cdots,A^{j_n}f(P^n_2)=f\circ\varphi_{1,0}^{\{\frac{2n}{3}\}}(P^n_1),
\]
where $A=D\varphi_{1,0}(P)$ and
$P^n_2=\varphi_{1,0}^{\{\frac{n}{3}\}+1}(P^n_1)$. In view of
(\ref{3-8}), we obtain

\[
A^if(P^n_2)\in U(0), \quad i=0,1,\cdots,j_n.
\]
Hence, there exists $\bar{\Delta}>0$ such that

\begin{align}\label{3-9}
\|A^if(P^n_2)\|\leq\bar{\Delta}, \quad i=0,1,\cdots,j_n.
\end{align}

As $A:\mathbb{R}^{2m}\to\mathbb{R}^{2m}$ is hyperbolic, there
exists an invariant splitting $\mathbb{R}^{2m}=E^s\oplus E^u$. For
each $z\in\mathbb{R}^{2m}$, we have $z=z_s+z_u$, $z_s\in E^s$,
$z_u\in E^u$ and $Az=A_sz_s+A_uz_u$, where $A_s=A|_{E^s}$ and
$A_u=A|_{E^u}$. Let $f(P^n_2)=y^n_s+y^n_u$, $y^n_s\in E^s$,
$y^n_u\in E^u$ and $A^{j_n}f(P^n_2)=z^n_s+z^n_u$, $z^n_s\in E^s$,
$z^n_u\in E^u$. Let $\lambda_1,\cdots,\lambda_m$ be the
eigenvalues of $A_s$. Then $|\lambda_i|<1$ for $i=1,\cdots,m$.
Since $A$ is similar to a symplectic matrix, then
$\frac{1}{\lambda_1},\cdots,\frac{1}{\lambda_m}$ are the
eigenvalues of $A_u$. Set $\lambda_\mathrm{max}=\max_{1\leq i\leq
m}|\lambda_i|$. It is a standard result that for arbitrary
$\varepsilon>0$, we have

\begin{align}\label{3-10}
\|A^i_sz_s\|\leq(\lambda_\mathrm{max}+\varepsilon)^i\|z_s\|, \quad
\forall z_s\in E^s,
\end{align}
for $i\in\mathbb{N}$ large enough. We choose $\varepsilon_0>0$
small enough such that $\lambda_\mathrm{max}+\varepsilon_0<1$.
Then from (\ref{3-10}) we have

\begin{align}\label{3-11}
\|A^{\{\frac{j_n}{2}\}}_sy^n_s\|\leq(\lambda_\mathrm{max}+\varepsilon_0)^{\{\frac{j_n}{2}\}}\|y^n_s\|\leq(\lambda_\mathrm{max}+\varepsilon_0)^{\{\frac{j_n}{2}\}}\bar{\Delta}
\end{align}
for $n$ large enough. Similarly, we have

\begin{align}\label{3-12}
\|A^{\{\frac{j_n}{2}\}}_uy^n_u\|=\|A^{-(j_n-\{\frac{j_n}{2}\})}_uz^n_u\|\leq(\lambda_\mathrm{max}+\varepsilon_0)^{j_n-\{\frac{j_n}{2}\}}\|z^n_u\|\leq(\lambda_\mathrm{max}+\varepsilon_0)^{\{\frac{j_n}{2}\}}\bar{\Delta}
\end{align}
for $n$ large enough. By (\ref{3-11}) and (\ref{3-12}), we obtain

\begin{align}\label{3-13}
\|A^{\{\frac{j_n}{2}\}}f(P^n_2)\|\leq\|A^{\{\frac{j_n}{2}\}}_sy^n_s\|+\|A^{\{\frac{j_n}{2}\}}_uy^n_u\|\leq
2\bar{\Delta}
(\lambda_\mathrm{max}+\varepsilon_0)^{\{\frac{j_n}{2}\}}
\end{align}
for $n$ large enough. Since
$j_n=\{\frac{2n}{3}\}-\{\frac{n}{3}\}-1$, then from (\ref{3-13})
we have

\begin{align}\label{3-14}
\|A^{\{\frac{j_n}{2}\}}f(P^n_2)\|\leq 2\bar{\Delta}
(\lambda_\mathrm{max}+\varepsilon_0)^{\frac{n}{12}}
\end{align}
for $n$ large enough. Note that
$A^{\{\frac{j_n}{2}\}}f(P^n_2)=f\circ\varphi_{1,0}^{\{\frac{j_n}{2}\}+\{\frac{n}{3}\}+1}(P^n_1)$
and $f(P)=0$. Since $f^{-1}$ is $\alpha$-H$\mathrm{\ddot{o}}$lder
continuous, from (\ref{3-14}) we have

\begin{align}\label{3-15}
\begin{split}
\|\varphi_{1,0}^{\{\frac{j_n}{2}\}+\{\frac{n}{3}\}+1}(P^n_1)-P\|
&=\|f^{-1}\circ A^{\{\frac{j_n}{2}\}}f(P^n_2)-f^{-1}(0)\|\\
&\leq C_1\|A^{\{\frac{j_n}{2}\}}f(P^n_2)-0\|^{\alpha}\\
& \leq
C_12^{\alpha}\bar{\Delta}^{\alpha}(\lambda_\mathrm{max}+\varepsilon_0)^{\frac{\alpha
n}{12}}
\end{split}
\end{align}
for $n$ large enough, where $C_1>0$ is a constant. Therefore,
there exists a constant $C_2>0$ independent of $u\in C(M,\mathbb{R}^1)$ and
$(x,\tau)\in M\times[0,1]$ such that

\begin{align}\label{3-16}
d\Big(\gamma_n\big(\{\frac{j_n}{2}\}+\{\frac{n}{3}\}+1\big),p\Big)\leq
C_2(\lambda_\mathrm{max}+\varepsilon_0)^{\frac{\alpha n}{12}}
\end{align}
for $n$ large enough. Note that the above estimate is independent
of $(x,\tau)$. By (\ref{3-6}) and (\ref{3-16}), for sufficiently
large $n$, we have

\[
\bar{u}(x,[\tau])-U^u_n(x,\tau)\leq
2C_{\mathrm{Lip}}C_2(\lambda_\mathrm{max}+\varepsilon_0)^{\frac{\alpha
n}{12}}, \quad \forall (x,\tau)\in M\times[0,1].
\]
Hence, there exists a constant $C_3>0$ such that

\[
\bar{u}(x,[\tau])-U^u_n(x,\tau)\leq
C_3(\lambda_\mathrm{max}+\varepsilon_0)^{\frac{\alpha n}{12}},
\quad \forall n\in\mathbb{N}, \ \forall (x,\tau)\in M\times[0,1],
\]
where the constant $C_3$ depends on $u$. Since
$0<\lambda_\mathrm{max}+\varepsilon_0<1$, there exists $\rho_1>0$
such that $(\lambda_\mathrm{max}+\varepsilon_0)^{\frac{\alpha
}{12}}=e^{-\rho_1}$. Thus, we have

\begin{align}\label{3-17}
\bar{u}(x,[\tau])-U^u_n(x,\tau)\leq C_3e^{-\rho_1n}, \quad \forall
n\in\mathbb{N}, \ \forall (x,\tau)\in M\times[0,1].
\end{align}

\vskip0.1cm

Step 2. We now prove the inequality (I2). Given $u\in
C(M,\mathbb{R}^1)$ and $(x,\tau)\in M\times[0,1]$, by (\ref{3-2})
we have

\[
\bar{u}(x,[\tau])=\inf_{z\in
M}(u(z)+h_{0,0}(z,p)+h_{0,[\tau]}(p,x)).
\]
Thus, there exists $y\in M$ such that

\begin{align}\label{3-18}
\bar{u}(x,[\tau])=u(y)+h_{0,0}(y,p)+h_{0,[\tau]}(p,x).
\end{align}

Since $h_{0,\cdot}(p,\cdot)$ is a backward weak KAM solution of
(\ref{1-1}), then there is a curve
$\beta_{x,[\tau]}:(-\infty,\tilde{\tau}]\to M$ with
$\beta_{x,[\tau]}(\tilde{\tau})=x$ and $[\tilde{\tau}]=[\tau]$
such that

\[
h_{0,[\tau]}(p,x)-h_{0,[t]}(p,\beta_{x,[\tau]}(t))=\int_t^{\tilde{\tau}}L(\beta_{x,[\tau]},\dot{\beta}_{x,[\tau]},s)ds,\quad
\forall t\in (-\infty,\tilde{\tau}].
\]
It is clear that $\beta_{x,[\tau]}$ is a minimizing curve and the
$\alpha$-limit set for
$(\beta_{x,[\tau]}(s),\dot{\beta}_{x,[\tau]}(s),[s])$ is $\Gamma$.
Similarly, since $-h_{\cdot,0}(\cdot,p)$ is a forward weak KAM
solution of (\ref{1-1}), then there exists a curve
$\omega_{y,0}:[\tilde{o},+\infty)\to M$ with
$\omega_{y,0}(\tilde{o})=y$ and $[\tilde{o}]=0$ such that

\[
h_{0,0}(y,p)-h_{[t],0}(\omega_{y,0}(t),p)=\int_{\tilde{o}}^tL(\omega_{y,0},\dot{\omega}_{y,0},s)ds,\quad
\forall t\in [\tilde{o},+\infty).
\]
Moreover, $\omega_{y,0}$ is a minimizing curve and the $\omega$-limit set
for $(\omega_{y,0}(s),\dot{\omega}_{y,0}(s),[s])$ is $\Gamma$.

Since $\Gamma$ is a hyperbolic 1-periodic orbit, then for the
tubular neighborhood $W_{\Gamma}$ there exist constants $T_1>0$ and $C_4>0$,
such that

\begin{align}\label{3-19}
d((\omega_{y,0}(s+\tilde{o}),\dot{\omega}_{y,0}(s+\tilde{o}),[s+\tilde{o}]),(\gamma_p(s),\dot{\gamma}_p(s),[s]))\leq
C_4e^{-\mu s}, \ s>T_1,
\end{align}
and

\begin{align}\label{3-20}
d((\beta_{x,[\tau]}(s+\tilde{\tau}),\dot{\beta}_{x,[\tau]}(s+\tilde{\tau}),[s+\tilde{\tau}]),(\gamma_p(s+[\tau]),\dot{\gamma}_p(s+[\tau]),[s+[\tau]]))\leq
C_4e^{\mu s}, \ s<-T_1,
\end{align}
where $T_1$ and $C_4$ depend only on $W_{\Gamma}$, and $\mu$
denotes the smallest positive Lyapunov exponent of $\Gamma$.

For $n\in\mathbb{N}$ large enough such that
$\frac{2n}{3}>\max\{T_1,2\}$, by (\ref{3-19}) we have

\[
d\Big(\big(\omega_{y,0}\big(\frac{2n}{3}+\tilde{o}\big),\dot{\omega}_{y,0}
\big(\frac{2n}{3}+\tilde{o}\big),[\frac{2n}{3}]\big),\big(\gamma_p\big(\frac{2n}{3}\big),\dot{\gamma}_p\big(\frac{2n}{3}\big),[\frac{2n}{3}]\big)\Big)\leq
C_4e^{-\mu \frac{2n}{3}}.
\]
We choose $0\leq d_1<1$ so that
$\big(\gamma_p\big(\frac{2n}{3}+d_1\big),\dot{\gamma}_p\big(\frac{2n}{3}+d_1\big),[\frac{2n}{3}+d_1]\big)=(p,v_p,0)$.
Then from (\ref{3-19}) we obtain

\begin{align}\label{3-21}
d\Big(\big(\omega_{y,0}\big(\frac{2n}{3}+\tilde{o}+d_1\big),\dot{\omega}_{y,0}
\big(\frac{2n}{3}+\tilde{o}+d_1\big),[\frac{2n}{3}+d_1]\big),\big(p,v_p,0\big)\Big)\leq
C_4e^{-\mu \frac{2n}{3}}.
\end{align}
Since $\omega_{y,0}$ is a minimizing curve, then

\begin{align*}
F_{0,\frac{2n}{3}+d_1}\big(y,\omega_{y,0}\big(\frac{2n}{3}+\tilde{o}+d_1\big)\big)
&=F_{\tilde{o},\frac{2n}{3}+\tilde{o}+d_1}\big(y,\omega_{y,0}\big(\frac{2n}{3}+\tilde{o}+d_1\big)\big)\\
&=\int_{\tilde{o}}^{\frac{2n}{3}+\tilde{o}+d_1}L(\omega_{y,0},\dot{\omega}_{y,0},s)ds.
\end{align*}
Let $\eta_1:[0,\frac{2n}{3}+d_1]\to M$ with $\eta_1(0)=y$ and
$\eta_1(\frac{2n}{3}+d_1)=p$ be a Tonelli minimizer such that

\[
F_{0,\frac{2n}{3}+d_1}(y,p)=\int_0^{\frac{2n}{3}+d_1}L(\eta_1,\dot{\eta}_1,s)ds.
\]
Hence, in view of (\ref{3-21}) we have

\begin{align}\label{3-22}
\Big|\int_0^{\frac{2n}{3}+d_1}L(\eta_1,\dot{\eta}_1,s)ds-\int_{\tilde{o}}^{\frac{2n}{3}+\tilde{o}+d_1}L(\omega_{y,0},\dot{\omega}_{y,0},s)ds\Big|\leq
D_{\mathrm{Lip}}C_4e^{-\mu \frac{2n}{3}}.
\end{align}
By (\ref{3-20}) we have $
d\Big(\big(\beta_{x,[\tau]}\big(-\frac{2n}{3}+\tilde{\tau}\big),\dot{\beta}_{x,[\tau]}\big(-\frac{2n}{3}+\tilde{\tau}\big),[-\frac{2n}{3}+\tilde{\tau}]\big)
,\big(\gamma_p\big(-\frac{2n}{3}+[\tau]\big),\dot{\gamma}_p\big(-\frac{2n}{3}+[\tau]\big),[-\frac{2n}{3}+[\tau]]\big)\Big)\leq
C_4e^{-\mu \frac{2n}{3}}. $ Choose $0\leq d_2<1$ so that
$\big(\gamma_p\big(-\frac{2n}{3}+[\tau]-d_2\big),\dot{\gamma}_p\big(-\frac{2n}{3}+[\tau]-d_2\big),[-\frac{2n}{3}+[\tau]-d_2]\big)=(p,v_p,0)$.
From (\ref{3-20}) we have

\begin{align}\label{3-23}
d\Big(\big(\beta_{x,[\tau]}\big(-\frac{2n}{3}+\tilde{\tau}-d_2\big),\dot{\beta}_{x,[\tau]}\big(-\frac{2n}{3}+\tilde{\tau}-d_2\big),[-\frac{2n}{3}+\tilde{\tau}-d_2]\big),
\big(p,v_p,0\big)\Big)\leq C_4e^{-\mu \frac{2n}{3}}.
\end{align}
Since $\beta_{x,[\tau]}$ is a minimizing curve, then

\[
F_{-\frac{2n}{3}+\tilde{\tau}-d_2,\tilde{\tau}}\big(\beta_{x,[\tau]}\big(-\frac{2n}{3}+\tilde{\tau}-d_2\big),x\big)
=\int_{-\frac{2n}{3}+\tilde{\tau}-d_2}^{\tilde{\tau}}L(\beta_{x,[\tau]},\dot{\beta}_{x,[\tau]},s)ds.
\]
Let $\eta_2:[-\frac{2n}{3}+\tilde{\tau}-d_2,\tilde{\tau}]\to M$
with $\eta_2(-\frac{2n}{3}+\tilde{\tau}-d_2)=p$ and
$\eta_2(\tilde{\tau})=x$ be a Tonelli minimizer such that

\[
F_{-\frac{2n}{3}+\tilde{\tau}-d_2,\tilde{\tau}}(p,x)=\int_{-\frac{2n}{3}+\tilde{\tau}-d_2}^{\tilde{\tau}}L(\eta_2,\dot{\eta}_2,s)ds.
\]
Hence, by (\ref{3-23}) we have

\begin{align}\label{3-24}
\Big|\int_{-\frac{2n}{3}+\tilde{\tau}-d_2}^{\tilde{\tau}}L(\eta_2,\dot{\eta}_2,s)ds-\int_{-\frac{2n}{3}+\tilde{\tau}-d_2}^{\tilde{\tau}}L(\beta_{x,[\tau]},\dot{\beta}_{x,[\tau]},s)ds\Big|\leq
D_{\mathrm{Lip}}C_4e^{-\mu \frac{2n}{3}}.
\end{align}

Define a curve
$\tilde{\eta}_2:[\frac{2n}{3}+d_1,\frac{4n}{3}+d_1+d_2]\to M$ by

\[
\tilde{\eta}_2(\varsigma)=\eta_2(\varsigma-\frac{4n}{3}-d_1-d_2+\tilde{\tau})
\]
for $\varsigma\in[\frac{2n}{3}+d_1,\frac{4n}{3}+d_1+d_2]$. Then

\[
\int_{-\frac{2n}{3}+\tilde{\tau}-d_2}^{\tilde{\tau}}L(\eta_2,\dot{\eta}_2,s)ds
=\int_{\frac{2n}{3}+d_1}^{\frac{4n}{3}+d_1+d_2}L(\tilde{\eta}_2,\dot{\tilde{\eta}}_2,\varsigma)d\varsigma.
\]
Note that $\frac{4n}{3}+d_1+d_2-\tau\in\mathbb{N}$ with $n\leq
\frac{4n}{3}+d_1+d_2-\tau\leq2n$. Set
$\bar{k}_n=\frac{4n}{3}+d_1+d_2-\tau$.

Consider the curve $\eta:[0,\tau+\bar{k}_n]\to M$ connecting $y$
and $x$ defined by

\[
\eta(s)= \left\{\begin{array}{ll}
          \eta_1(s),\quad & s\in[0,\frac{2n}{3}+d_1],\\[2mm]
          \tilde{\eta}_2(s),\quad & s\in[\frac{2n}{3}+d_1,\tau+\bar{k}_n].
\end{array}\right.
\]
For $n$ large enough, from (\ref{3-18}) and
(\ref{3-21})--(\ref{3-24}) we have

\begin{align*}
&U^u_n(x,\tau)-\bar{u}(x,[\tau])\\
&\leq u(\eta(0))+\int_0^{\tau+\bar{k}_n}L(\eta,\dot{\eta},s)ds-\bar{u}(x,[\tau])\\
&\leq-h_{0,0}\big(\omega_{y,0}\big(\frac{2n}{3}+\tilde{o}+d_1\big),p\big)
-h_{0,0}\big(p,\beta_{x,[\tau]}\big(-\frac{2n}{3}+\tilde{\tau}-d_2\big)\big)+2D_{\mathrm{Lip}}C_4e^{-\mu\frac{2n}{3}}\\
&\leq 2(C_{\mathrm{Lip}}+D_{\mathrm{Lip}})C_4e^{-\mu\frac{2n}{3}}.
\end{align*}
Note that the above estimate is independent of $(x,\tau)\in
M\times[0,1]$ and $u\in C(M,\mathbb{R}^1)$. Let
$C_5=2(C_{\mathrm{Lip}}+D_{\mathrm{Lip}})C_4$. Then, for $n$ large
enough, we have

\[
U^u_n(x,\tau)-\bar{u}(x,[\tau])\leq C_5e^{-\mu\frac{2n}{3}}, \quad
\forall (x,\tau)\in M\times[0,1].
\]
Hence, there exists a constant $C_6>0$ such that

\begin{align}\label{3-25}
U^u_n(x,\tau)-\bar{u}(x,[\tau])\leq C_6e^{-\mu\frac{2n}{3}}, \quad
\forall n\in\mathbb{N},\ \forall (x,\tau)\in M\times[0,1],
\end{align}
where the constant $C_6$ depends on $u$.

Let $\rho_2=\frac{2}{3}\mu$, $K=\max\{C_3,C_6\}$ and
$\rho=\min\{\rho_1,\rho_2\}$. Then from (\ref{3-17}) and
(\ref{3-25}), we have

\[
\|U^u_n(x,\tau)-\bar{u}(x,[\tau])\|_\infty\leq Ke^{-\rho n}, \quad
\forall n\in\mathbb{N}.
\]
\hfill$\Box$

\section{An example}

In this section we provide an example showing that, even though
the Aubry set of the time-independent Lagrangian system consists
of one hyperbolic periodic orbit, the rate of convergence of
the L-O semigroup cannot be better than $O(\frac{1}{t})$.

\begin{example}\label{ex}
Consider the following Lagrangian

\begin{align*}
L_a:\ T\mathbb{T}^2\to\mathbb{R}^1,\quad
(x,y,\dot{x},\dot{y})\mapsto\frac{1}{2}\dot{x}^2+1-\cos(2\pi
x)+\frac{1}{2}(\dot{y}-c)^2, \ c\in\mathbb{R}^1.
\end{align*}
\end{example}
The associated Hamiltonian $H_a:T^*\mathbb{T}^2\to\mathbb{R}^1$ is
given by
$H_a(x,y,p_1,p_2)=\frac{1}{2}(p_1^2+p_2^2)+cp_2-1+\cos(2\pi x)$.
Observe that
$\Gamma_a=\{0\}\times\mathbb{S}^1\times\{0\}\times\{c\}$ is a
hyperbolic periodic orbit of the Lagrangian system $L_a$. It is
easy to check that the Ma$\mathrm{\tilde{n}}\mathrm{\acute{e}}$
critical value $c(L_a)=0$ and that the probability measure evenly
distributed along $\Gamma_a$---which we shall denote $\nu$---is an
invariant probability measure of the Lagrangian system $L_a$.
Moreover, $\nu$ is the unique minimal measure (0-action minimizing
measure) for $L_a$. Thus, we can conclude that
$\tilde{\mathcal{M}}_0=\Gamma_a=\{0\}\times\mathbb{S}^1\times\{0\}\times\{c\}$.
Note that in this example the Aubry set $\tilde{\mathcal{A}}_0$
coincides with the Mather set $\tilde{\mathcal{M}}_0$, which
implies that $\tilde{\mathcal{A}}_0$ consists of only one
hyperbolic periodic orbit $\Gamma_a$ and
$\mathcal{A}_0=\{0\}\times\mathbb{S}^1$.

\begin{lemma}\label{le4-1}
For each $u\in C(\mathbb{T}^2,\mathbb{R}^1)$ which satisfies
$\min_{\mathcal{A}_0}u=\min_{\mathbb{T}^2}u$, we have
\[
\bar{u}|_{\mathcal{A}_0}\equiv\min_{\mathcal{A}_0}u=\min_{\mathbb{T}^2}u,
\]
where $\bar{u}=\lim_{t\to+\infty}T^a_tu$ is a backward weak KAM
solution of $H_a(x,y,u_x,u_y)=0$.
\end{lemma}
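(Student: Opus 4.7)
The plan is to establish two matching inequalities: $\bar u|_{\mathcal{A}_0} \geq \min_{\mathbb{T}^2} u$ everywhere on $\mathbb{T}^2$ (a fortiori on $\mathcal{A}_0$), and $\bar u|_{\mathcal{A}_0} \leq \min_{\mathcal{A}_0} u$. I would start from Fathi's representation of the backward weak KAM solution in the time-independent case, which gives
\[
\bar u(x,y) \;=\; \inf_{(x',y') \in \mathbb{T}^2}\bigl(u(x',y') + h((x',y'),(x,y))\bigr),
\]
where $h$ denotes the (time-independent) Peierls barrier of $L_a$. The key observation is that $L_a$ is a sum of three non-negative terms, so $L_a \geq 0$ on all of $T\mathbb{T}^2$; hence every finite-time Tonelli action $F_{0,t}$ is non-negative, and passing to the $\liminf$ as $t\to+\infty$ gives $h \geq 0$ everywhere on $\mathbb{T}^2 \times \mathbb{T}^2$. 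This already yields the lower bound $\bar u(x,y) \geq \min_{\mathbb{T}^2} u$ for every $(x,y) \in \mathbb{T}^2$.

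For the upper bound on $\mathcal{A}_0 = \{0\}\times\mathbb{S}^1$, I would show that $h$ vanishes identically on $\mathcal{A}_0 \times \mathcal{A}_0$. Evaluating $L_a$ along $\Gamma_a$ at velocity $(0,c)$ gives $L_a \equiv 0$ on the orbit, so for any two points $q_1, q_2 \in \mathcal{A}_0$ there is an arc of $\Gamma_a$ from $q_1$ to $q_2$ of action $0$; appending arbitrarily many full traversals of $\Gamma_a$ produces, for an unbounded sequence of times $t_n$, connecting curves of total action $0$. Therefore $F_{0,t_n}(q_1,q_2) \leq 0$, whence $h(q_1,q_2) \leq 0$, and combined with the non-negativity from the previous paragraph, $h(q_1,q_2) = 0$.

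To conclude, let $q^\ast \in \mathcal{A}_0$ attain $\min_{\mathcal{A}_0} u$. For any $q \in \mathcal{A}_0$,
\[
\bar u(q) \;\leq\; u(q^\ast) + h(q^\ast, q) \;=\; \min_{\mathcal{A}_0} u + 0 \;=\; \min_{\mathbb{T}^2} u,
\]
where the last equality uses the hypothesis $\min_{\mathcal{A}_0} u = \min_{\mathbb{T}^2} u$. Combined with the already-proved lower bound $\bar u(q) \geq \min_{\mathbb{T}^2} u$, this forces $\bar u(q) = \min_{\mathbb{T}^2} u$ for all $q \in \mathcal{A}_0$. There is no serious obstacle in this argument; everything reduces to the two elementary facts that $L_a \geq 0$ on $T\mathbb{T}^2$ and $L_a \equiv 0$ along $\Gamma_a$, both of which are very special features of this particular example.
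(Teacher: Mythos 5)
Your proof is correct, and the lower bound (from $L_a\ge 0$) is exactly the paper's; where you diverge is the upper bound on $\mathcal{A}_0$. The paper never invokes the representation $\bar u(x)=\inf_{y}\bigl(u(y)+h(y,x)\bigr)$: it works directly from the definition of $T^a_t$, connecting the minimizer $(0,y_*)$ of $u$ on $\mathcal{A}_0$ to $(0,y)$ by the explicit curve $x\equiv 0$, $y$ linear with slope $c'$ satisfying $|c'-c|\le \frac1t$, whose action is at most $\frac1{2t}$; this gives $T^a_tu(0,y)\le u(0,y_*)+\frac1{2t}$ for \emph{every} $t$, which both proves the lemma and (consonant with the point of the example) quantifies the convergence from above at rate $O(\frac1t)$. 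You instead show $h\equiv 0$ on $\mathcal{A}_0\times\mathcal{A}_0$ by running along $\Gamma_a$ itself, which has zero action but reaches $(0,y)$ from $(0,y_*)$ only at the discrete times $t=(y-y_*+k)/c$; that suffices because the limit $\bar u=\lim_t T^a_tu$ exists, but it gives no information at intermediate times and requires $c\neq 0$ (for $c=0$ the "orbit" degenerates to a circle of fixed points and your arc argument fails literally, while the paper's drifting curve still works — though in that case $\Gamma_a$ is not a periodic orbit anyway). Two small bookkeeping points: the formula $\bar u=\inf_y(u(y)+h(y,\cdot))$ for the time-independent Lax-Oleinik limit is a known result but is not proved in this paper (Theorem 1.2 here concerns the new operators in the time-periodic case), so you should cite it or, more economically, bypass it: with your zero-action arcs you already have $T^a_{t_n}u(0,y)\le u(0,y_*)$ along an unbounded sequence $t_n$, and existence of the limit then yields $\bar u(0,y)\le u(0,y_*)$ without any barrier representation; and for the lower bound the representation is likewise unnecessary, since $L_a\ge 0$ gives $T^a_tu\ge\min_{\mathbb{T}^2}u$ directly.
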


\begin{proof}
For each $(0,y)\in\mathcal{A}_0$, from the definition of $T^a_t$
we have

\[
\bar{u}(0,y)=\lim_{t\to+\infty}T^a_tu(0,y)=\lim_{t\to+\infty}\inf_{(x',y')\in\mathbb{T}^2}
\{u(x',y')+\int_0^tL_a(\gamma,\dot{\gamma})ds\},
\]
where $\gamma:[0,t]\to\mathbb{T}^2$ with $\gamma(0)=(x',y')$ and
$\gamma(t)=(0,y)$ is a Tonelli minimizer. Since $L_a\geq 0$, then
$\bar{u}(0,y)\geq\min_{\mathbb{T}^2}u=\min_{\mathcal{A}_0}u$.

It suffices to show that
$\bar{u}(0,y)\leq\min_{\mathbb{T}^2}u=\min_{\mathcal{A}_0}u$. Take
$(0,y_*)\in\mathcal{A}_0$ with $u(0,y_*)=\min_{\mathcal{A}_0}u$.
Let $\tilde{y}_*\in\mathbb{R}^1$ be an arbitrary point in the
fiber over $y_*$. For $t>0$, consider the following two curves

\[
\tilde{\gamma}_{2,c}:[0,t]\to\mathbb{R}^1,\quad s\mapsto
cs+\tilde{y}_*,
\]
and

\[
\tilde{\gamma}_{2,c'}:[0,t]\to\mathbb{R}^1,\quad s\mapsto
c's+\tilde{y}_*,\ c'\in\mathbb{R}^1,
\]
with $\tilde{\gamma}_{2,c'}(t)=\tilde{y}$, where $\tilde{y}$ is a
point in the fiber over $y$ such that $\tilde{y}$ and
$\tilde{\gamma}_{2,c}(t)$ are in the same fundamental domain in
$\mathbb{R}^1$. Let
$r=\tilde{\gamma}_{2,c'}(t)-\tilde{\gamma}_{2,c}(t)=(c'-c)t$. Then
$|r|\leq 1$ and $|c'-c|\leq\frac{1}{t}$. Let
$\tilde{\gamma}_{c'}=(0,\tilde{\gamma}_{2,c'}):[0,t]\to\mathbb{R}^2$
and $\gamma_{c'}=\pi\tilde{\gamma}_{c'}$, where
$\pi:\mathbb{R}^2\to\mathbb{T}^2$ is the standard universal
covering projection. Then $\gamma_{c'}:[0,t]\to\mathbb{T}^2$ is a
curve connecting $(0,y_*)$ and $(0,y)$. Hence, we have

\[
T^a_tu(0,y)\leq
u(\gamma_{c'}(0))+\int_0^tL(\gamma_{c'},\dot{\gamma}_{c'})ds=u(0,y_*)+\frac{1}{2}\int_0^t(c'-c)^2ds\leq
u(0,y_*)+\frac{1}{2t}.
\]
Let $t\to+\infty$, to deduce

\[
\bar{u}(0,y)=\lim_{t\to+\infty}T^a_tu(0,y)\leq
u(0,y_*)=\min_{\mathcal{A}_0}u=\min_{\mathbb{T}^2}u.
\]
\end{proof}

In the following we show that there exist $u\in
C(\mathbb{T}^2,\mathbb{R}^1)$, $(x_0,y_0)\in\mathbb{T}^2$ and
$\{t_n\}_{n=1}^{+\infty}$ with $t_n\to+\infty$ as $n\to+\infty$
such that

\[
|T^a_{t_n}u(x_0,y_0)-\bar{u}(x_0,y_0)|\geq O(\frac{1}{t_n}),\quad
n\to+\infty.
\]

Set $(x_0,y_0)=(0,\frac{1}{2})\in\mathbb{T}^2$. Let
$(\tilde{x}_0,\tilde{y}_0)\in\mathbb{R}^2$ denote a generic point
in the fiber over $(x_0,y_0)$, i.e.,
$\pi(\tilde{x}_0,\tilde{y}_0)=(x_0,y_0)$. Define a continuous
function on $\mathbb{R}^2$ as follows: for each
$(\tilde{x},\tilde{y})\in\mathbb{R}^2$

\[
\tilde{u}(\tilde{x},\tilde{y})=\left\{
                               \begin{array}{ll}
                                  \delta-|\tilde{y}-\tilde{y}_0|, &
                                  |\tilde{y}-\tilde{y}_0|\leq\delta,\\
                                  0, & \mathrm{otherwise},
                         \end{array}
                         \right.
\]
where $0<\delta<\frac{1}{2}$. Then we can define a continuous
function on $\mathbb{T}^2$ as
$u(x,y)=\tilde{u}(\tilde{x},\tilde{y})$ for all
$(x,y)\in\mathbb{T}^2$, where $(\tilde{x},\tilde{y})$ is an
arbitrary point in the fiber over $(x,y)$. From Lemma \ref{le4-1},
we have
$\bar{u}(0,y_0)=\min_{\mathbb{T}^2}u=\min_{\mathcal{A}_0}u=0$.

Now fix a point $(0,\tilde{y}^0_0)\in\mathbb{R}^2$ in the fiber
over $(0,y_0)$. Then there exist
$\{(0,\tilde{y}^n_0)\}_{n=1}^{+\infty}$ in the fiber over
$(0,y_0)$ and $\{t_n\}_{n=1}^{+\infty}$ with $t_n\to+\infty$ as
$n\to+\infty$ such that
$|\tilde{y}^n_0-ct_n-\tilde{y}^0_0|\leq\frac{\delta}{2}$. Let
$\tilde{z}^n=\tilde{y}^n_0-ct_n$, $\forall n$. Then
$|\tilde{z}^n-\tilde{y}^0_0|\leq\frac{\delta}{2}$, $\forall n$.
For each $t_n$ there is $(x_n,\xi_n)\in\mathbb{T}^2$ such that

\begin{align}\label{4-1}
T^a_{t_n}u(0,y_0)=u(x_n,\xi_n)+\int_0^{t_n}L_a(\gamma_n,\dot{\gamma}_n)ds,
\end{align}
where
$\gamma_n=(\gamma_{1,n},\gamma_{2,n}):[0,t_n]\to\mathbb{T}^2$ with
$\gamma_n(0)=(x_n,\xi_n)$ and $\gamma_n(t_n)=(0,y_0)$ is a Tonelli
minimizer. We assert that $x_n=0$, $\forall n$, i.e.,
$(x_n,\xi_n)\in\mathcal{A}_0$, $\forall n$. For, otherwise, there
would be $x_n\neq 0$ for some $n$. Then we have

\begin{align}\label{4-2}
u(0,\xi_n)+\frac{1}{2}\int_0^{t_n}(\dot{\gamma}_{2,n}-c)^2ds<
u(x_n,\xi_n)+\int_0^{t_n}\big(\frac{1}{2}\dot{\gamma}_{1,n}^2+1-\cos(2\pi\gamma_{1,n})
+\frac{1}{2}(\dot{\gamma}_{2,n}-c)^2\big)ds.
\end{align}
Let $\gamma'_n=(0,\gamma_{2,n}):[0,t_n]\to\mathbb{T}^2$. Then $\gamma'_n$ is a curve in
$\mathbb{T}^2$ connecting $(0,\xi_n)$ and $(0,y_0)$. In view of (\ref{4-2}), we have

\begin{align*}
T^a_{t_n}u(0,y_0) & \leq
u(0,\xi_n)+\frac{1}{2}\int_0^{t_n}(\dot{\gamma}_{2,n}-c)^2ds\\
&<u(x_n,\xi_n)+\int_0^{t_n}\big(\frac{1}{2}\dot{\gamma}_{1,n}^2+1-\cos(2\pi\gamma_{1,n})
+\frac{1}{2}(\dot{\gamma}_{2,n}-c)^2\big)ds,
\end{align*}
which contradicts (\ref{4-1}). Hence $x_n=0$, $\forall n$. It is easy to see that

\begin{align}\label{4-3}
T^a_{t_n}u(0,y_0)=u(0,\xi_n)+\frac{1}{2}\int_0^{t_n}(\dot{\gamma}_{2,n}-c)^2ds,\quad
n=1,2,\cdots.
\end{align}

In view of the lifting property of the covering projection, for
each $n$ there is a unique curve
$\tilde{\gamma}_{2,n}:[0,t_n]\to\mathbb{R}^1$ with
$\pi\tilde{\gamma}_{2,n}=\gamma_{2,n}$ and
$\tilde{\gamma}_{2,n}(t_n)=\tilde{y}^n_0$. Set
$\tilde{\xi}^n=\tilde{\gamma}_{2,n}(0)$. Then
$\pi\tilde{\xi}^n=\xi_n$. Moreover, $\tilde{\gamma}_{2,n}$ has the
form $\tilde{\gamma}_{2,n}(s)=c_ns+\tilde{\xi}^n$, $s\in[0,t_n]$,
$c_n\in\mathbb{R}^1$. It is clear that
$\tilde{\xi}^n=\tilde{y}^n_0-c_nt_n$.

If $|\tilde{\xi}^n-\tilde{z}^n|\leq\frac{\delta}{4}$, then from
$|\tilde{z}^n-\tilde{y}^0_0|\leq\frac{\delta}{2}$, we have
$|\tilde{\xi}^n-\tilde{y}^0_0|\leq\frac{3\delta}{4}$. Therefore,
in view of (\ref{4-3}),

\begin{align}\label{4-4}
T^a_{t_n}u(0,y_0)=u(0,\xi_n)+\frac{1}{2}\int_0^{t_n}(\dot{\gamma}_{2,n}-c)^2ds\geq
u(0,\xi_n)=\tilde{u}(0,\tilde{\xi}^n)\geq \frac{\delta}{4}.
\end{align}
By (\ref{4-4}) we may deduce that there can be only a finite
number of $\tilde{\xi}^n$'s such that
$|\tilde{\xi}^n-\tilde{z}^n|\leq\frac{\delta}{4}$. Suppose not.
There are $\{t_{n_i}\}_{i=1}^{+\infty}$ and
$\{\tilde{\xi}^{n_i}\}_{i=1}^{+\infty}$ such that
$T^a_{t_{n_i}}u(0,y_0)\geq \frac{\delta}{4}$, $i=1,2,\cdots$,
which contradicts
$\lim_{i\to+\infty}T^a_{t_{n_i}}u(0,y_0)=\bar{u}(0,y_0)=0$.

For $\tilde{\xi}^n$ with
$|\tilde{\xi}^n-\tilde{z}^n|>\frac{\delta}{4}$, we have

\[
\frac{\delta}{4}<|\tilde{\xi}^n-\tilde{z}^n|=|(c-c_n)t_n|,
\]
which implies that $|c-c_n|>\frac{\delta}{4t_n}$. Then

\[
T^a_{t_n}u(0,y_0)=u(0,\xi_n)+\frac{1}{2}\int_0^{t_n}(\dot{\gamma}_{2,n}-c)^2ds>\frac{\delta^2}{32t_n}.
\]
Therefore,

\[
|T^a_{t_n}u(0,y_0)-\bar{u}(0,y_0)|=|T^a_{t_n}u(0,y_0)|>\frac{\delta^2}{32t_n},
\]
i.e.,

\[
|T^a_{t_n}u(0,y_0)-\bar{u}(0,y_0)|\geq O(\frac{1}{t_n}), \quad
n\to+\infty.
\]

\vskip1cm



\end{document}